\newcommand{\etype}[1]{\renewcommand{\labelenumi}{(#1{enumi})}}
\def\ealph{\etype{\alph}}
\def\dispace{\setlength{\itemsep}{2pt}}
\newcommand{\ds}[1]{\ {#1} \ }
\newcommand{\dss}[1]{\quad {#1} \quad }
\newcommand{\thminref}[2]{\pSkip\textbf{Theorem {#1}.}{ \emph{#2}}}
\def\tlT{\widetilde{T}}
\def\tlS{\widetilde{S}}
\def\tlM{\widetilde{M}}
\def\tlN{\widetilde{N}}
\def\tlD{\widetilde{D}}
\def\Dir{\dss \Rightarrow}
\def\noi{\noindent}
\def\pSkip{\vskip 1.5mm \noindent}
\def\semiring0{semiring$^{\dagger}$}
\newcommand{\Id}{\operatorname{Id}}
\newcommand{\hal}{\operatorname{hal}}
\newcommand{\SA}{\operatorname{SA}}
\newtheorem{thm}{Theorem} [section]
\newtheorem*{thm*}{Theorem}
\newtheorem{cor}[thm]{Corollary}
\newtheorem{prop}[thm]{Proposition}
\newtheorem*{prop*}{Proposition}
\newtheorem*{claim*} {Claim}
\newtheorem*{theorem13.5'} {Theorem 13.5$'$}
\newtheorem{acknowledgment*}[thm] {Acknowledgment}
\newtheorem{example}[thm]{Example}
\newtheorem{examp}[thm]{Example}
  \newtheorem{remarks}[thm]{Remarks}
    \newtheorem*{remarks*} {Remarks}
 \newtheorem*{remark*}{Remark}
 \newtheorem{defn}[thm]{Definition}
\newtheorem*{defn*}{Definition}
\newtheorem{notation}[thm]{Notation}
\newtheorem*{notation*} {Notation}
\def\F{\mathbb{F}}
\def\B{\mathbb{B}}
 \renewcommand{\sectionmark}[1]{}
\begin{document}

\title[ Generation of summand absorbing submodules ]
{Generation of summand absorbing submodules % \\[2mm] of a module over a semiring
}
\author[Z. Izhakian]{Zur Izhakian}
\address{Institute  of Mathematics,
 University of Aberdeen, AB24 3UE,
Aberdeen,  UK.}
    \email{zzur@abdn.ac.uk %; zzur@math.biu.ac.il
    }
\author[M. Knebusch]{Manfred Knebusch}
\address{Department of Mathematics,
NWF-I Mathematik, Universit\"at Regensburg 93040 Regensburg,
Germany} \email{manfred.knebusch@mathematik.uni-regensburg.de}
\author[L. Rowen]{Louis Rowen}
 \address{Department of Mathematics,
 Bar-Ilan University, 52900 Ramat-Gan, Israel}
 \email{rowen@math.biu.ac.il}

%******************************* AMS classification ***********************
%\subjclass{??? Primary 12K10, 13B25; Secondary 51M20}
\subjclass[2010]{Primary   14T05, 16D70, 16Y60 ; Secondary 06F05,
06F25, 13C10, 14N05}
%******************************* date *************************************
\date{\today}

%******************************* keywords *********************************

\keywords{Semiring,  lacking zero sums, direct sum decomposition,
free (semi)module,
%projective (semi)module, indecomposable,
%semidirect complement, upper bound monoid, weak complement,
summand absorbing submodule,
halo, additive spine.}

%******************************* file name *********************************

\thanks{\noindent \underline{\hskip 3cm } \\ File name: \jobname}

%******************************* abstract *********************************

\begin{abstract}
An $R$-module $V$ over a semiring $R$ lacks zero sums (LZS) if $ x +
y = 0 \; \Rightarrow \;  x = y = 0$. More generally, a
submodule $W$ of $V$  is ``summand absorbing'', if $ \forall \,
x, y \in V: \ x + y \in W \; \Rightarrow \; x \in W, \; y \in W. $
These relate to  tropical algebra and modules over idempotent
semirings, as well as modules
over semirings of sums of squares. In previous work, we have explored the lattice of summand
absorbing submodules of a given LZS module, especially those that
are finitely generated, in terms of the lattice-theoretic Krull
dimension. In this note we describe their explicit generation.
\end{abstract}

\maketitle
\setcounter{tocdepth}{1}

{ \small \tableofcontents}

\numberwithin{equation}{section}
\section{Introduction}

Semirings, initiated by Costa \cite{Cos} and exposed by Golan
\cite{golan92}, have played an increasing role recently due to
increased interest to tropical algebra which involves   the max-plus
algebra and related constructions. Another important example is the
set of positive elements in an ordered ring. Both of these examples
 lack zero sums (termed ``zero sum free'' in \cite{golan92}) in the following sense:
%\begin{defn}
An $R$-module $V$ over a semiring $R$ \textbf{lacks zero sums}
(abbreviated \textbf{LZS}),~ if
\begin{equation}\label{eq:1.27}
  \forall \, x, y \in V : \  x + y = 0 \Dir  x = y = 0. \tag{LZS}
\end{equation}
%\end{defn}
\noindent
As noted in \cite[Proposition~1.8]{Dec}, any module over an
idempotent semiring is LZS, yielding a large assortment of examples.
Furthermore, by \cite[Examples~1.6]{Dec} being LZS is closed under
submodules, direct products, and modules $\operatorname{Fun} (S, V)$
of functions from a set $S$ to a module~$V$.
Thus, examples  include the max-plus algebra, function semirings,
polynomial semirings and Laurent polynomial semirings over
idempotent semirings, and the ``boolean semifield'' $\mathbb B = \{
-\infty, 0 \}$ (and thus subalgebras of algebras that are free
modules over $\B$).  This shows that our results pertain to
``$\F_1$-geometry.''

Continuing the theory  from \cite{Dec},  we called a submodule $W$
of $V$  \textbf{summand absorbing} (abbreviated \textbf{SA}) in
$V$, if
\begin{equation}\label{eq:1.1}
\forall \, x, y \in V: \  x + y \in W \Dir x \in W, \; y \in W;
\tag{SA}
\end{equation} we then call $W$ an \textbf{SA-submodule} of
$V$. An \textbf{SA-left ideal} of a semiring $R$ is an SA-submodule of~$R$.

SA-submodules were the main subject of investigation in \cite{IKR8},
largely because of their nice lattice-theoretic properties. The
objective of \cite{IKR8} was to continue to develop the theory of
SA-modules over semirings, along the  lattice-theoretic  lines of
classical module theory (especially the Noetherian theory), with the
goal of paralleling the Wedderburn-Miller-Remak-Krull-Schmidt and
Jordan-H$\operatorname{\ddot{o}}$lder theorems.

Given an $R$-module $V$ and a set $S$ of generators of $V$, we
detect a new set~ $T$ of generators of $V$, which is ``small'' in
some sense if $S$ is ``small'', and gives us sets of generators of
all SA-submodules $W$ of $V$ in a coherent way. Here is an instance.

\begin{defn}\label{def:2.1} A set of generators $T$ of $V$
is \textbf{SA-adapted}, if every SA-submodule $W$ of~$V$ is generated
by the set $W \cap T$.\end{defn}

We   obtain a reasonable SA-adapted set of generators $T$ from a
given set of generators~ $S$ by employing the so-called
\textbf{additive spine} $M$ of a module:

\begin{defn}
\label{Def:3.1} Assume that $S$ is a subset of $V$.
\begin{itemize}
\item[a)] The \textbf{halo} $\tlS $ of $S$ in $V$ is the set of all $v \in V$ such that there exist $\lambda, \mu \in R$ with $\lambda v \in S$ and $\mu \lambda v = v$.
\item[b)] $S$ is called an \textbf{additive spine} of the $R$-module $V$, if $V$ is additively generated by $\tlS $, which we denote as
$V = \sum\limits^{\infty} \tlS $.
\end{itemize}
\end{defn}
\noi Thus the additive spines of $R$ are  of $R$ considered as a
left $R$-module.

\pSkip

In the special case
that both~$M$ and $S$ are finite it will turn out that also ~$T$ is
finite, and so all SA-submodules $W$ of $V$ are generated by at most $\vert
T \vert$ elements.  \begin{example}\label{examp:9.12}
 If $V$ has a finite SA-adapted  set of generators, then~$V$ is SA-artinian (as defined in \cite[Definition~1.4]{IKR8}). \end{example}

%
%we examine how SA-submodules appear, and consider
%explicit generation of SA-submodules, via \textbf{spines} on $R$,
%built from \textbf{halos}, which permit rather efficient generation
%of these modules, cf.~Theorem~\ref{thm:2.11}, and these are
%presented over $V$ in \S\ref{sec:3}.

One main general result:

\thminref{\ref{Thm:3.3}}{Assume that $S$ is an additive spine of an
$R$-module $V$. Then every SA-submodule $W$ of $V$ is generated by
$W \cap S$, and moreover $W \cap S$ is an additive spine of~$W$.}

\pSkip
 For semirings, this specializes in \S\ref{sec:2} to:
\thminref{\ref{thm:2.9}}{Assume that $S$ is a set of generators of
an $R$-module $V$, and $M$ is an additive spine of $R$. Then any
SA-submodule $W$ of $V$ is generated by the set $W \cap (MS)$.}
\pSkip Consequently, if $V$ is generated by $n$ elements, then every
SA-submodule of $V$ is generated by $mn$ elements, where $m = |M|$
is independent of $n$. An application to matrices is given in
Theorem~\ref{Cor:2.15}, and more generally to monoid semirings in
Theorem~\ref{Theorem:2.18}.
%
%\begin{thm}\label{thm:2.11} Assume that $V$ is a module over a semiring $R$ that is

Section \ref{sec:4} focuses on finite generation in terms of finite additive spines.

\section{Generating SA-submodules by use of additive spines}\label{sec:2}

 Throughout this paper, $R$ is a semiring (with $1 = 1_R$), and $V$ is
a (left) module (sometimes called ``semimodule'') over $R$; i.e.,
$(V,+)$ is a monoid satisfying the familiar module axioms as well
as $r 0_V = 0_R x = 0_V$ for all $r\in R,$ $ x \in V.$  The zero
submodule $\{ 0_V \}$ is usually written as~$0.$
%We denote the set of all submodules of~$V$ by~$\Mod (V)$.

We first  state basic facts about additive spines of $R$,
and give basic examples.

\begin{defn}\label{def:2.3} (Special case of Definition \ref{Def:3.1}).
Given a subset $M$ of $R$.
\begin{enumerate}
 \item[a)] We define  the set
\[ \tlM : = \{ x \in R \ds  \vert \, \exists \, y, z \in R :  \ y x \in M, \; z y x = x \},\]
which we call the  \textbf{halo} of $M$ in $R$.

\item[b)] If the halo $\tlM$ additively generates $R$, i.e., $R = \sum\limits^{\infty} \tlM$, we call $M$ an \textbf{additive spine of}~ $R$.
    \end{enumerate}

\end{defn}

We state some facts about halos which are immediate consequences of Definition \ref{def:2.3}.a.

\begin{remarks}\label{rem:2.4} $ $
\begin{itemize}\dispace
\item[i)] $M \subset \tlM$ for any set $M \subset R$.
\item[ii)] If $M \subset N \subset R$, then $\tlM \subset \tlN $.
\item[iii)] If $(M_i \ds \vert i \in I)$ is a family of subsets of $R$, then
$$ \Bigl( \bigcup\limits_{i \in I} M_i \Bigr)^{\sim} = \bigcup\limits_{i \in I} \tlM_i. $$

\item[iv)] $\{ 0 _R\}^{\sim} = \{ 0_R \}$ and $(M \setminus \{ 0 _R\})^{\sim} = \tlM.$
\end{itemize}
%\hfill $\square$
\end{remarks}

Due to the last remark we may assume in any study of halos that $0_R \in M$ or $0_R \not\in M$, whatever is more convenient.

\pSkip

Here are  perhaps the most basic examples of halos deserving interest.

\begin{example}
\label{examp:2.5} Let $M = \{ 1_R \}$. Then $\tlM$ is the set of left invertible elements of $R$. Indeed, if $x \in \tlM$, then there exists $y \in R$ with $yx = 1_R$. Conversely, if $x$ is left-invertible there exists $y \in R$ with $y x = 1_R$, and so $xyx = x$, which proves that $x \in \tlM$.
\end{example}

\begin{example}
\label{examp:2.6} Let $M = \{ e \}$ with $e$ an idempotent of $R$.
If $x \in \tlM$, then there exist $y, z \in R$ with $yx = e$, $ze =
x$. It follows that $xe = x$, yielding the von Neumann condition
$xyx = x$, cf.  \cite{IJK2}. Conversely, if $yx = e$ and $xyx = x$, then clearly $x
\in \tlM$. This proves that
\[ \{ e \}^{\sim} = \{ x \in R \ds \vert \, \exists \ y \in R :
 \ y x = e, \; xyx = x \}. \]
\end{example}

Let $\Id (R)$ denote the set of all idempotents of $R$. Starting from Example \ref{examp:2.6}, we obtain the following fact.

\begin{prop}
  \label{prop:2.7} If $R$ is any semiring, then
\[ \Id (R)^{\sim} = \{ x \in R \ds \vert \, \exists \ y \in R : xyx = x \} . \]
\end{prop}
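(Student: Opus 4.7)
The plan is to reduce immediately to Example~\ref{examp:2.6} via Remark~\ref{rem:2.4}.iii. By that remark, $\Id(R)^{\sim} = \bigcup_{e \in \Id(R)} \{e\}^{\sim}$, and Example~\ref{examp:2.6} already describes each piece $\{e\}^{\sim}$ explicitly as $\{x \in R \mid \exists\, y \in R : yx = e,\ xyx = x\}$. So the claim reduces to showing that an element $x$ satisfies $xyx = x$ for some $y$ if and only if there exists an idempotent $e$ and some $y$ with $yx = e$ and $xyx = x$.

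The forward inclusion is immediate: if $x \in \{e\}^{\sim}$ for some idempotent $e$, then in particular $xyx = x$ for a suitable $y$, placing $x$ in the right-hand side.

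For the reverse inclusion, the key trick is: given $x$ and $y$ with $xyx = x$, set $e := yx$. Then
\[ e^2 = (yx)(yx) = y(xyx) = yx = e, \]
so $e \in \Id(R)$. Since $yx = e$ and $xyx = x$ by assumption, $x \in \{e\}^{\sim}$, and hence $x \in \Id(R)^{\sim}$.

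There is no serious obstacle here; the only point requiring a tiny bit of care is recognizing that $e = yx$ is the natural idempotent to produce from a von Neumann regularity relation $xyx = x$, which follows from one line of associativity.
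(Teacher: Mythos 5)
Your proposal is correct and follows essentially the same route as the paper: decompose $\Id(R)^{\sim}$ via Remark~\ref{rem:2.4}.iii into the halos $\{e\}^{\sim}$, use Example~\ref{examp:2.6} for the forward inclusion, and for the converse take $e := yx$, check $e^2 = e$, and verify $x \in \{e\}^{\sim}$. No differences worth noting.
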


 \begin{proof}$\Id (R)^{\sim}$ is the union of the sets $\{ e \}^{\sim}$ with $e$ an idempotent of $R$ (cf. Remark \ref{rem:2.4}.iii).
 Thus it is clear from Example~\ref{examp:2.6} that for every $x \in \Id (R)^{\sim}$ there exists some $y \in R$ with $xyx = x$.
 Conversely, if $xyx = x$, then $yx \cdot yx = yx$, and so $e : = yx$ is an idempotent of $R$. Moreover $xe = x$, and so $x \in \{ e \}^{\sim}$.
\end{proof}
We state an immediate consequence of this proposition.

\begin{cor}
\label{cor:2.8}   For any subset $M$ of $R$ we have
\[ [ M \cap \Id (R)]^{\sim} = \{ x \in \tlM \ds \vert \, \exists \ y \in R : xyx = x \}, \]
and $\tlM$ is the \textbf{disjoint} union of this set and $[ M \setminus \Id (R)]^{\sim}$. \end{cor}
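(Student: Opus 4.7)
The plan is to handle the asserted equality first and then derive the disjoint‑union statement from it; both parts lean on Proposition~\ref{prop:2.7} together with the formal halo properties collected in Remark~\ref{rem:2.4}.

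For the easy inclusion $[M \cap \Id(R)]^{\sim} \subseteq \{x \in \tlM : \exists\, y \in R,\ xyx = x\}$, I would simply invoke the monotonicity of halos (Remark~\ref{rem:2.4}.ii) applied to the two containments $M \cap \Id(R) \subseteq M$ and $M \cap \Id(R) \subseteq \Id(R)$; this places $[M \cap \Id(R)]^{\sim}$ inside $\tlM \cap \Id(R)^{\sim}$, and Proposition~\ref{prop:2.7} rewrites $\Id(R)^{\sim}$ as the set of $x$ admitting some $y$ with $xyx=x$. The opposite inclusion is the substantive step. Starting from $x \in \tlM$ with $xyx = x$, the calculation in the proof of Proposition~\ref{prop:2.7} gives that $e := yx$ is an idempotent satisfying $xe = x$, while the membership $x \in \tlM$ furnishes $y_0, z_0 \in R$ with $m := y_0 x \in M$ and $z_0 m = x$. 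To place $x$ inside $[M \cap \Id(R)]^{\sim}$ I would need an idempotent element actually lying in $M$ that serves as a halo‑witness for $x$—equivalently, by Example~\ref{examp:2.6}, some $y' \in R$ with $y' x \in M \cap \Id(R)$ and $x(y'x)=x$. The natural candidate is constructed from $m$ and $e$, exploiting the identity $me = y_0(xe) = y_0 x = m$, the idempotence $e^2 = e$, and the relation $z_0 m = x$. Promoting the abstractly existing idempotent $e \in \Id(R)$ to one that actually belongs to $M$ is where I expect the main technical obstacle to sit; everything else is bookkeeping.

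For the disjoint‑union statement, Remark~\ref{rem:2.4}.iii applied to the decomposition $M = (M \cap \Id(R)) \cup (M \setminus \Id(R))$ yields immediately $\tlM = [M \cap \Id(R)]^{\sim} \cup [M \setminus \Id(R)]^{\sim}$, so only emptiness of the intersection needs attention. Contrapositively, an $x$ lying in both halos would carry simultaneously an idempotent witness $e \in M \cap \Id(R)$ and a non‑idempotent witness $m' \in M \setminus \Id(R)$ (with some $z_1 m' = x$). By the first part one already has $xyx = x$; combining this with $xe = x$, $e^2 = e$, and $z_1 m' = x$, the goal is to conclude that $m'$ is itself idempotent, contradicting $m' \notin \Id(R)$. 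Thus the disjointness reduces to essentially the same manipulation of halo witnesses as the reverse inclusion above, which is the one genuinely delicate ingredient in the proof.
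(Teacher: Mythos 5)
Your first and last steps are correct: monotonicity of halos plus Proposition~\ref{prop:2.7} gives the inclusion $[M\cap\Id(R)]^{\sim}\subseteq\{x\in\tlM \mid \exists y: xyx=x\}$, and Remark~\ref{rem:2.4}.iii gives the plain union $\tlM=[M\cap\Id(R)]^{\sim}\cup[M\setminus\Id(R)]^{\sim}$. But the two places you yourself flag as ``the main technical obstacle'' are genuine gaps, not postponed bookkeeping, and they cannot be closed in the stated generality. The witness $y$ with $xyx=x$ is unrelated to $M$: the idempotent $e=yx$ it produces need not lie in $M$, and $M$ may contain no idempotents at all. Concretely, take $R=\mathbb{Q}$ (a ring, hence a semiring) and $M=\{2\}$: then $\tlM=\mathbb{Q}\setminus\{0\}$ and every $x\in\tlM$ satisfies $xyx=x$ with $y=x^{-1}$, while $[M\cap\Id(R)]^{\sim}=\emptyset^{\sim}=\emptyset$, so the displayed equality fails. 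For the disjointness, take $M=\{1,2\}$: the element $1$ lies in $[M\cap\Id(R)]^{\sim}=\{1\}^{\sim}$ and also in $[M\setminus\Id(R)]^{\sim}=\{2\}^{\sim}$ (witnesses $y=2$, $z=\tfrac12$), so the two halos are not disjoint; the same phenomenon occurs in idempotent semifields such as max-plus, so no standing hypothesis of the paper rescues it. In particular your hoped-for conclusion in the disjointness argument---that the non-idempotent witness $m'$ would be forced to be idempotent---is simply not derivable from $xyx=x$, $xe=x$, $z_1m'=x$.

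For comparison: the paper offers no proof at all (the corollary is asserted as an ``immediate consequence'' of Proposition~\ref{prop:2.7}), so there is no argument to measure yours against; but the statement that really is immediate from Example~\ref{examp:2.6} and Remark~\ref{rem:2.4}.iii is the \emph{same-witness} version $[M\cap\Id(R)]^{\sim}=\{x\in\tlM \mid \exists y\in R:\ yx\in M \text{ and } xyx=x\}$. Indeed, if $xyx=x$ then $yx$ is automatically idempotent, so demanding that the same $y$ satisfy $yx\in M$ forces $yx\in M\cap\Id(R)$ and (with $z=x$) puts $x$ in $[M\cap\Id(R)]^{\sim}$; conversely Example~\ref{examp:2.6} supplies such a $y$. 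Your attempt, which works with two unrelated witnesses ($y$ for von Neumann regularity, $y_0,z_0$ for membership in $\tlM$), is trying to prove the stronger literal statement, and that is exactly where it breaks: the reverse inclusion and the disjointness are unprovable as stated, as the counterexamples above show.
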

\noi
The set $[ M \cap \Id (R)]^{\sim}$ may be regarded as the ``easy part'' of the halo $\tlM$.

 \begin{notation}\label{notat:2.1} Given (nonempty) subsets $A, B$ of $R$, we denote the set of all products $ab$ with $a \in A$, $b \in B$ by $AB$ (or $A \cdot B$).
 Similarly, if $A \subset R$ and $X \subset V$, then $AX$ denotes the set of products $ax$ with $a \in A$, $x \in X$. Furthermore,  $\sum\limits^{\infty} A$ and $\sum\limits^{\infty} X$ denote the set of all finite sums of elements of $A$ in $R$ and of $X$ in $V$ respectively.
Admitting also the empty sum of elements of $A$ or $X$, we always
have $0_R \in \sum\limits^{\infty} A$, $0_V \in \sum\limits^{\infty}
X$.
If necessary, we write more precisely $\sum\limits^{\infty}_R A$ and $\sum\limits^{\infty}_V X$ instead of $\sum\limits^{\infty} A$ and $\sum\limits^{\infty} X$. %\hfill $\square$
\end{notation}
\noindent
In this notation, a set $S \subset V$ generates the $R$-module $V$,
if $V = \sum\limits^{\infty} RS$.

\bigskip We are ready for a central result in this note.
 $\SA (V)$ denotes the poset consisting of all SA-submodules of $V$.

\begin{thm}
\label{thm:2.9} Assume that $S$ is a set of generators of a (left) $R$-module $V$, and $M$ is an additive spine of $R$. Then any SA-submodule $W$ of $V$ is generated by the set $W \cap (MS)$.
\end{thm}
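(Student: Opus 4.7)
The plan is to show that an arbitrary element $v\in W$ can be rewritten as an $R$-linear combination of elements of $W\cap (MS)$, using the two defining properties of the halo $\tilde M$.

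First I would pick $v\in W$ and, using that $S$ generates $V$, express
\[
v \ = \ \sum_j r_j s_j, \qquad r_j \in R, \ s_j \in S.
\]
Because $M$ is an additive spine of $R$, each coefficient $r_j$ is a finite sum of elements of $\tilde M$. Substituting these expansions gives a presentation
\[
v \ = \ \sum_k x_k s_{j(k)}, \qquad x_k \in \tilde M, \ s_{j(k)} \in S.
\]
For each $k$, the definition of $\tilde M$ supplies $y_k, z_k \in R$ with $y_k x_k \in M$ and $z_k y_k x_k = x_k$.

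The next step is to promote each individual summand $x_k s_{j(k)}$ into $W$. This is where the SA property of $W$ is used: by a trivial induction on the number of summands (splitting off one term at a time and applying the binary SA axiom \eqref{eq:1.1}), $v = \sum_k x_k s_{j(k)} \in W$ forces $x_k s_{j(k)} \in W$ for every $k$. Since $W$ is a submodule, we also have $y_k x_k s_{j(k)} \in W$. But on the other hand $y_k x_k \in M$, so $y_k x_k s_{j(k)} \in MS$, and hence
\[
y_k x_k s_{j(k)} \ \in \ W \cap (MS).
\]
Finally, the relation $z_k y_k x_k = x_k$ gives
\[
x_k s_{j(k)} \ = \ (z_k y_k x_k)\, s_{j(k)} \ = \ z_k \bigl(y_k x_k s_{j(k)}\bigr),
\]
exhibiting $x_k s_{j(k)}$ as an $R$-multiple of an element of $W\cap(MS)$. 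Summing over $k$ presents $v$ as an $R$-linear combination of elements of $W\cap(MS)$, which proves that $W\cap(MS)$ generates $W$.

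The only subtle point is the step in which one passes from $\sum_k x_k s_{j(k)} \in W$ to the conclusion that every summand lies in $W$; this is a routine iteration of the binary SA axiom but deserves an explicit mention, since the whole argument hinges on it. Everything else reduces to bookkeeping with the halo relations $y_k x_k \in M$ and $z_k y_k x_k = x_k$.
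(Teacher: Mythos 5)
Your proposal is correct and follows essentially the same route as the paper's proof: expand $v$ over the halo $\tilde M$ (the paper states this directly as $V=\sum^{\infty}\tilde M S$), use the SA property to put each summand $x_k s_{j(k)}$ into $W$, then use the halo relations $y_kx_k\in M$, $z_ky_kx_k=x_k$ to replace each summand by an $R$-multiple of an element of $W\cap(MS)$. Your explicit remark that passing from $\sum_k x_k s_{j(k)}\in W$ to $x_k s_{j(k)}\in W$ requires iterating the binary SA axiom is a fair point of care which the paper leaves implicit.
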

\begin{proof}
 Since $V = \sum\limits^{\infty} RS$ and $R = \sum\limits^{\infty} \tlM$, we have $V = \sum\limits^{\infty} \tlM S$.

\noi
Let $w \in W$, $w \ne 0$, be given. Then
\begin{equation}\label{eq:a.1}
w = \sum\limits_{i = 1}^n x_i s_i \tag{$A$}\end{equation} with $n \in
\mathbb{N}$, $s_i \in S$, $x_i \in \tlM$. Since $W$ is in $\SA(V)$,
it follows that
\begin{equation*}\label{eq:2}
x_i s_i \in W \qquad \mbox{for} \; 1 \leq i \leq n. \end{equation*}
Now choose  $y_i, z_i \in R$ such that $m_i : = y_i x_i \in M$ and $x_i = z_i m_i$. Then
\begin{equation}\label{eq:a.3}
y_i (x_i s_i) = m_i s_i \in W \cap (MS) \tag{B}\end{equation}
and \[ z_i m_i s_i = z_i y_i x_i s_i = x_i s_i. \]
From \eqref{eq:a.1} we obtain that
\begin{equation}\label{eq:a.4}
w = \sum\limits_{i = 1}^n z_i (m_i s_i). \tag{C}\end{equation}
We conclude from \eqref{eq:a.3} and \eqref{eq:a.4} that $W \cap (MS)$ generates $W$.
\end{proof}

\begin{cor}\label{cor:2.10} Assume that $R$ has a finite additive spine $M$ and $V$ has a finite set of generators $S$. Then every SA-submodule $W$ of $V$ is finitely generated, more precisely, generated by at most $\vert M \vert \cdot \vert S \vert$ elements (independent of the choice of $W$!). \end{cor}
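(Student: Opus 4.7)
The plan is to invoke Theorem~\ref{thm:2.9} directly and then read off the cardinality bound from the definition of the product set $MS$. By hypothesis, $M$ is an additive spine of $R$ and $S$ is a (finite) generating set of $V$, so the theorem applies and tells us that for any SA-submodule $W$ of $V$,
\[
W \ds \text{is generated by} \ds W \cap (MS).
\]
Thus the generating set for $W$ produced by the theorem is a subset of $MS$, and in particular its size is at most $|MS|$.

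Next, I would observe that by Notation~\ref{notat:2.1}, $MS$ consists of all products $ms$ with $m \in M$ and $s \in S$, so
\[
|MS| \ds \leq \ds |M| \cdot |S|.
\]
Combining this with the previous step gives $|W \cap (MS)| \leq |M| \cdot |S|$, which is the required bound. Crucially, this bound depends only on the fixed data $M$ and $S$, not on $W$, which justifies the parenthetical remark ``(independent of the choice of $W$!)''.

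There is essentially no obstacle here: the hard work has already been done in Theorem~\ref{thm:2.9}, and the corollary is a matter of translating the qualitative statement ``generated by $W \cap (MS)$'' into a cardinality estimate. The only minor point worth mentioning in the write-up is that even if the generators $ms \in MS$ are not all distinct (different pairs $(m,s)$ may yield the same product), this can only decrease $|MS|$, so the inequality $|MS| \leq |M| \cdot |S|$ remains valid.
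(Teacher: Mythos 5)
Your proposal is correct and matches the paper's intended argument: the corollary is an immediate consequence of Theorem~\ref{thm:2.9}, with the bound coming from $|W\cap(MS)|\leq|MS|\leq|M|\cdot|S|$, which is exactly how the paper treats it (it states the corollary without further proof). Your remark that repeated products only shrink $|MS|$ is a sensible touch but changes nothing.
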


\begin{thm}\label{thm:2.11} Assume that $V$ is a module over a semiring $R$ that is
additively generated by the set of its left invertible elements. Then every set of generators $S$ of $V$ is SA-adapted. \end{thm}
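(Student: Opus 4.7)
The plan is to invoke Example~\ref{examp:2.5} together with Theorem~\ref{thm:2.9}. By Example~\ref{examp:2.5}, with $M = \{1_R\}$, the halo $\tlM$ is precisely the set of left invertible elements of $R$. The hypothesis that $V$ is additively generated by its left invertible elements therefore says exactly that $R = \sum\nolimits^{\infty} \tlM$, i.e., $M = \{1_R\}$ is an additive spine of $R$ in the sense of Definition~\ref{def:2.3}.b.

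Now apply Theorem~\ref{thm:2.9} to the set of generators $S$ of $V$ and the additive spine $M = \{1_R\}$. The theorem yields that any SA-submodule $W$ of $V$ is generated by $W \cap (MS)$. But $MS = \{1_R\} \cdot S = S$, so $W$ is generated by $W \cap S$. Since this holds for every SA-submodule $W$, the set $S$ is SA-adapted per Definition~\ref{def:2.1}.

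There is no real obstacle here: once Example~\ref{examp:2.5} is read as the statement that $\{1_R\}$ is an additive spine precisely when $R$ is additively generated by its left invertible elements, Theorem~\ref{thm:2.9} gives the conclusion immediately. The only point worth spelling out is the equality $MS = S$ when $M = \{1_R\}$, which makes it transparent why \emph{every} generating set $S$ (not merely some adapted one) works in this setting.
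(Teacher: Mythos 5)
Your proof is correct and is essentially the paper's own argument: read Example~\ref{examp:2.5} as saying $\{1_R\}$ is an additive spine of $R$ under the hypothesis, then apply Theorem~\ref{thm:2.9} with $MS = S$. The only blemish is the slip where you write that ``$V$ is additively generated by its left invertible elements''---the hypothesis concerns the semiring $R$, as your subsequent line $R = \sum^{\infty}\widetilde{M}$ correctly uses.
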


\begin{proof} We read off from Example \ref{examp:2.5} that $\{ 1_R \}$ is an additive spine of $R$. So by Theorem~\ref{thm:2.9} every SA-submodule $W$ of $V$ is generated by $W \cap S = W \cap (1_R S)$.  \end{proof}

We take a look at additive spines of matrix semirings.

\begin{example}
\label{examp:2.12} Assume that $C$ is a semiring that is additively
generated by $\{ 1_C \}$, $$C = \sum\limits^{\infty} \{ 1_C \}.$$ In
other terms, the unique homomorphism $\varphi : \mathbb{N}_0 \to C$
with $\varphi (1) = 1_C$ is surjective. Then the semiring
\[ R = M_n (C) = \sum\limits_{i, j = 1}^n C e_{ij} \]
of $(n \times n)$-matrices with entries in $C$, and $e_{ij}$ the usual matrix units, has the additive spine
\[ D : = \{ e_{11}, e_{22}, \dots , e_{nn} \}. \]
Indeed, for every $j \in \{ 1, \dots, n \}$
\[ \{ e_{jj} \}^{\sim} \supset \{ e_{ij} \ds \vert 1 \leq i \leq n \}, \]
since $e_{ji} e_{ij} = e_{jj}$, $e_{ij} e_{jj} = e_{ij}$, and so $\tlD  = \bigcup\limits_{j} \{ e_{jj} \}^{\sim}$ contains the set $E : = \{ e_{ij} \ds \vert 1 \leq i, j \leq n \}$ of all matrix units, which by the nature of $C$ generates $M_{n} (C)$ additively.
\end{example}
This example can be amplified to a theorem about additive spines in
arbitrary matrix rings $M_n (A)$ by use of a general principle to
``multiply'' additive spines, which runs as follows:

\begin{prop}
\label{Prop:2.13} Assume that $R_1$ and $R_2$ are subsemirings of a semiring $R$, such that~$R$ is additively generated by $R_1 R_2$, i.e., $R = \sum\limits^{\infty} R_1R_2$. Assume moreover  that the elements of~ $R_1$ commute with those of $R_2$. Assume finally that $M_i$ is an additive spine of $R_i$. Let~ $\tlM_i$ denote the halo of $M_i$ in $R_i$ $(i = 1, 2)$. Then $\tlM_1 \tlM_2$ is contained in the halo $(M_1 M_2)^{\sim}$ of $M_1 M_2$ in $R$, and $M_1 M_2$ is an additive spine of $R$.
\end{prop}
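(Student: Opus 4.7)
The plan is to handle the two claims in Proposition~\ref{Prop:2.13} in sequence, with the first being the substantive part; the second then follows by distributing finite sums.

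For the containment $\tlM_1 \tlM_2 \subset (M_1 M_2)^\sim$, I would start with an arbitrary product $x = x_1 x_2$ where $x_i \in \tlM_i$. By the definition of the halo of $M_i$ in $R_i$, there exist $y_i, z_i \in R_i$ with $y_i x_i \in M_i$ and $z_i y_i x_i = x_i$ for $i = 1, 2$. The key move is to set $y := y_1 y_2$ and $z := z_1 z_2$ and verify the two halo conditions with respect to $M_1 M_2$ in $R$. Using the hypothesis that elements of $R_1$ commute with elements of $R_2$, one reshuffles:
\[ y x = (y_1 y_2)(x_1 x_2) = (y_1 x_1)(y_2 x_2) \in M_1 M_2, \]
and similarly
\[ z y x = (z_1 z_2)(y_1 y_2)(x_1 x_2) = (z_1 y_1 x_1)(z_2 y_2 x_2) = x_1 x_2 = x. \]
Hence $x \in (M_1 M_2)^\sim$, as desired. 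The only subtle point here is that the commutativity is applied between two \emph{subsemirings}, and the $y_i, z_i$ genuinely lie in $R_i$ (not just $R$) by the definition of the halo taken within $R_i$; this is where the hypothesis has real force.

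For the spine claim, I would use that $R = \sum^\infty R_1 R_2$ together with $R_i = \sum^\infty \tlM_i$. Any $r \in R$ is a finite sum of products $r_1 r_2$ with $r_j \in R_j$, and in turn each such $r_j$ is a finite sum of elements of $\tlM_j$. Distributing, $r_1 r_2$ becomes a finite sum of products $a_1 a_2$ with $a_j \in \tlM_j$, each of which lies in $(M_1 M_2)^\sim$ by the first part. Therefore $R \subset \sum^\infty (M_1 M_2)^\sim$, which (together with the reverse containment $\sum^\infty (M_1 M_2)^\sim \subset R$) shows that $M_1 M_2$ is an additive spine of $R$.

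The main (mild) obstacle is purely bookkeeping: ensuring that when we form $y = y_1 y_2$ and $z = z_1 z_2$, the commutativity hypothesis is genuinely needed to move $y_2$ past $x_1$ and $z_2$ past $y_1 x_1$, and that all the intermediate factors remain in their respective subsemirings so that the products $y_i x_i$ land in $M_i$. Everything else is a direct unpacking of Definition~\ref{def:2.3} and a finite distributive-law computation, with no nontrivial estimation involved.
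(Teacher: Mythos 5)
Your argument is correct and coincides with the paper's own proof: the same witnesses $y_1y_2$ and $z_1z_2$ are used, with the commutativity hypothesis rearranging the products exactly as you do, and the spine claim is likewise obtained by distributing $R=\sum^{\infty}R_1R_2$ with $R_i=\sum^{\infty}\tlM_i$. Nothing further is needed.
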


\begin{proof}
   Let $x_i \in \tlM_i$  $(i = 1,2)$ be given. We have elements $y_i, z_i$ of $R_i$ with $m_i : = y_i x_i \in M_i$ and $z_i m_i = x_i$. Now
\[ (y_1 y_2) (x_1 x_2) = (y_1 x_1) (y_2 x_2) = m_1 m_2 \]
and
\[ (z_1 z_2) (m_1 m_2) = (z_1 m_1) (z_2 m_2) = x_1 x_2 . \]
This proves that $x_1 x_2 \in (M_1 M_2)^{\sim}$. It follows that
\[ \Bigl( \sum\limits^{\infty}\tlM_1 \Bigr) \cdot \Bigl( \sum\limits^{\infty}\tlM_2 \Bigr) = R_1 R_2 \]
and then that
\[ R = \sum\limits^{\infty} R_1 R_2 = \sum\limits^{\infty} \tlM_1 \tlM_2. \] \vskip -6mm
\end{proof}
\begin{thm}\label{Thm:2.14} Assume that $R$ is the semiring of $(n \times n)$-matrices over a semiring $A$,~ so
\[ R : = M_n (A) = \sum\limits_{i, j = 1}^n A e_{ij} \]
with the usual matrix units $e_{ij}$. Let $N$ be an additive spine of $A$. Then the set $M : = \bigcup\limits_{i = 1}^n Ne_{ii}$, consisting of the diagonal matrices with entries in $N$, is an additive spine of $R$.\end{thm}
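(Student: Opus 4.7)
The plan is to apply Proposition~\ref{Prop:2.13} to two commuting subsemirings of $R = M_n(A)$, chosen so that the product of their additive spines is exactly $M$.

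I would take $R_1 := \{aI_n : a \in A\}$, the subsemiring of scalar matrices, which is canonically isomorphic to $A$ via $a \mapsto aI_n$; and let $R_2$ be the subsemiring of $R$ generated as a semiring by the matrix units $\{e_{ij}\}$. Concretely $R_2 = M_n(C_0)$, where $C_0$ is the subsemiring of $A$ additively generated by $\{1_A\}$. Scalar matrices are central in $R$, so the commutativity hypothesis of Proposition~\ref{Prop:2.13} holds; and every matrix $(a_{ij}) \in R$ decomposes as $\sum_{i,j}(a_{ij}I_n) \cdot e_{ij}$, verifying $R = \sum\limits^{\infty} R_1 R_2$.

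For the additive spines: under the isomorphism $R_1 \cong A$, the set $M_1 := \{aI_n : a \in N\}$ inherits from $N$ the property of being an additive spine of $R_1$. Since $C_0$ is by construction additively generated by $\{1_{C_0}\}$, Example~\ref{examp:2.12} applied \emph{inside} $R_2$ shows that $M_2 := \{e_{11}, \dots, e_{nn}\}$ is an additive spine of $R_2$. Proposition~\ref{Prop:2.13} now yields that $M_1 M_2$ is an additive spine of $R$, and a direct computation gives
\[
M_1 M_2 \;=\; \{(aI_n)\, e_{ii} : a \in N,\; 1 \le i \le n\} \;=\; \bigcup_{i=1}^{n} N e_{ii} \;=\; M,
\]
as required.

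The only points that deserve care are (i) checking that Example~\ref{examp:2.12} genuinely applies \emph{inside} $R_2$ rather than inside all of $M_n(A)$: this is fine because the witnesses $y_{ji} = e_{ji}$ and $z_{ij} = e_{ij}$ for $e_{ij} \in \{e_{jj}\}^{\sim}$ already lie in $R_2$; and (ii) observing that the spine property transfers through the isomorphism $R_1 \cong A$, which is routine. Neither constitutes a real obstacle, so the whole argument reduces to setting up the correct decomposition $R = \sum\limits^{\infty} R_1 R_2$ and invoking the multiplication principle.
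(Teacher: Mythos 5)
Your argument is, up to swapping the roles of the two subsemirings, exactly the paper's own proof: the paper takes $R_1 = M_n(C)$ (your $R_2$, $C$ the image of $\mathbb{N}_0$ in $A$) with additive spine $\{e_{11},\dots,e_{nn}\}$ from Example~\ref{examp:2.12}, and $R_2 = A\cdot 1_R$ (your $R_1$) with the spine coming from $N$, and then invokes Proposition~\ref{Prop:2.13}, just as you do; your points (i) and (ii) are precisely the details the paper leaves implicit. One small slip in your justification: scalar matrices are \emph{not} central in $M_n(A)$ when $A$ is noncommutative, but the hypothesis of Proposition~\ref{Prop:2.13} only requires that elements of $\{aI_n : a\in A\}$ commute with elements of $M_n(C_0)$, which does hold because the entries of matrices in $M_n(C_0)$ are sums of copies of $1_A$ and hence commute with every $a\in A$. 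With that one-line correction the proof is complete and coincides with the paper's.
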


\begin{proof} Let $C$ denote the smallest subsemiring of $A$, $C = \{ n \cdot 1_A \ds \vert n \in \mathbb{N} \}$.  We have seen that $R_1 : = M_n (C)$ has the additive spine $D : = \{ e_{ii} \ds \vert 1 \leq i \leq n \}$ (Example \ref{examp:2.12}). Let $R_2 : = A \cdot 1_R$.
This is the subsemiring of $R$ consisting of all matrices $a I$
%$\left( \begin{array}{ccc}
%a & & 0 \\
%& \ddots & \\
%0 & & a \end{array} \right)$
 with $a \in A$, where ~$I$ is the identity matrix.   It has the additive spine $\mathbb N \cdot 1_{R_2}$. Now $R = \sum R_1 R_2$, and the elements of $R_1$ commute with those of $R_2$. Thus,  by Proposition \ref{Prop:2.13}, $R$ has the additive spine $D \cdot (\mathbb N 1_{R_2}) = \bigcup\limits_{i = 1}^n \mathbb N e_{ii}$. \end{proof}

Recalling Theorem \ref{thm:2.9} we obtain

\begin{thm}
\label{Cor:2.15} Assume that $V$ is an $M_n (A)$-module, $A$ any
semiring, and $S$ a system of generators of $V$. Assume furthermore
that $N$ is an additive spine of $A$. Then any SA-submodule $W$ of
$M_n (A)$ is generated by the set
\[ W \cap \bigg( \bigcup\limits_{i = 1}^n N e_{ii} \bigg) = \bigcup\limits_{i = 1}^n W \cap (N e_{ii}). \]
If $N$ is finite, then $W$ can be generated by at most $n \cdot \vert N \vert$ elements. \end{thm}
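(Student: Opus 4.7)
The plan is to combine directly the two preceding ingredients. Theorem~\ref{Thm:2.14} supplies the explicit additive spine $M := \bigcup_{i=1}^n N e_{ii}$ of $R := M_n(A)$, and Theorem~\ref{thm:2.9} then translates the existence of such a spine into generation of every SA-submodule by its intersection with a product of the form $M\cdot(\text{generators})$.

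Concretely, I would first invoke Theorem~\ref{Thm:2.14} with the given additive spine $N$ of $A$ to conclude that $M = \bigcup_{i=1}^n N e_{ii}$ is an additive spine of $R = M_n(A)$. Applying Theorem~\ref{thm:2.9} to $V = R = M_n(A)$ with generating set $S = \{1_R\}$ (the natural choice, since the conclusion describes SA-submodules of $M_n(A)$ viewed as a left module over itself), one obtains that every such SA-submodule $W$ is generated by $W \cap (M \cdot 1_R) = W \cap M$. Distributing the intersection over the defining union of $M$ gives the asserted identity $W \cap \bigl(\bigcup_{i=1}^n N e_{ii}\bigr) = \bigcup_{i=1}^n W \cap (N e_{ii})$.

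For the cardinality statement, if $N$ is finite then $|M| \le n|N|$ (the sets $N e_{ii}$ are pairwise disjoint except possibly at $0$, if $0_A \in N$), so $|W \cap M| \le n|N|$, which is the claimed bound on the number of generators of $W$.

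No step presents a real obstacle: the substantive work has already been carried out in Theorem~\ref{Thm:2.14} (the construction of the additive spine of $M_n(A)$ via the ``multiplication of spines'' principle of Proposition~\ref{Prop:2.13}) and in the master generation result Theorem~\ref{thm:2.9}, so the present theorem is essentially their immediate specialization.
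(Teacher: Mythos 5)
Your proposal is correct and matches the paper's intent exactly: the paper offers no separate proof, presenting the theorem as an immediate consequence of Theorem~\ref{Thm:2.14} (which gives the additive spine $\bigcup_{i=1}^n Ne_{ii}$ of $M_n(A)$) combined with Theorem~\ref{thm:2.9}, which is precisely your derivation. Your reading of the somewhat loosely stated hypotheses (taking $V=M_n(A)$ as a module over itself with $S=\{1_R\}$, since the stated conclusion concerns SA-submodules of $M_n(A)$ and does not involve $S$) is a sensible resolution, and the cardinality bound $|W\cap M|\le n|N|$ is handled correctly.
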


The proof of Theorem~\ref{Thm:2.14} can be seen in a much wider
context, as we explain now.
\begin{defn}
\label{Def:2.16} Let $S = (S, \cdot)$ be a multiplicative monoid. We
call a subset~$T$ of $S$ a \textbf{monoid spine of} $S$,   if for any
$s \in S$ there exist $s_1, s_2 \in S$ such that $t : = s_1 s \in T$
and $s_2 t = s$. \end{defn}

Given any semiring $A$ and monoid $S = (S, \cdot)$, we denote, as usual, the \textbf{monoid-semiring} of $S$ over $A$ by $A[S]$,
which is the free $A$-module with base $S \setminus \{ 0 \}.$

\medskip
In the case that the monoid $S$ is \textbf{without zero}, i.e., $S$ does \textbf{not} contain an absorbing element~0, ($0 \cdot S = S \cdot 0 = 0$ for all $s \in S$), the elements $x$ of $R : = A [S]$ are the formal sums
\[ x = \sum\limits_{s \in S} a_s s, \]

\noi with coefficients $a_s \in A$ uniquely determined by $x$, only
finitely many non-zero. The multiplication is determined by the rule
$(as) \cdot (bt) = (ab) (st)$ for $a, b \in A$, $s, t \in S$.
Identifying $a = a \cdot 1_S$, $s = 1_A \cdot s$, we regard $A$ as a
subsemiring of $R$ and $S$ as a submonoid of $(R, \cdot)$.

If the monoid $S$ has a zero $0 = 0_S$, i.e., is pointed, we take for $R = A [S]$ the
free $A$-module with base $S \setminus \{ 0 \}$ and multiplication
rule $(as) \cdot (bt) = (ab) (st)$ if $st \ne 0 $, $(as) (bt) = 0$
otherwise. Now the nonzero elements of $R = A [S]$ are formal sums
$\sum\limits_{s \ne 0} a_s s$. We identify again $a = a \cdot 1_S$,
$s = 1_A \cdot s$ for $s \in S \setminus \{ 0 \}$, and now also $0 =
0_A$. Then again $A$ becomes a subsemiring of $R$ and $S$ a
submonoid of $(R, \cdot)$. We have $R = \sum\limits^{\infty} AS$ in
both cases.

\begin{example}
 \label{Examp:2.17} The matrix semiring $M_n (A)$ coincides with $A [S]$, where $S$ is the monoid $\{ e_{ij} \ds \vert 1 \leq i, j \leq n \} \cup \{ 0 \}$ with multiplication rule $e_{ij} e_{kl} = \delta_{jk} e_{il}$. Note that $S$ has the monoid spine $\{ e_{11}, \dots, e_{nn} \} \cup \{ 0 \}$.
\end{example}

\begin{thm}
\label{Theorem:2.18} Assume that $S$ is a multiplicative monoid
(with   or without zero) and ~ $T$ is a monoid spine of $S$. Assume
furthermore that $A$ is a semiring and $N$ is an additive spine of
$A$. Then $N \cdot T$ is an additive spine of $A[S]$.
\end{thm}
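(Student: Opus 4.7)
The plan is to reduce to Proposition~\ref{Prop:2.13} by splitting $R := A[S]$ into two commuting subsemirings whose product additively generates $R$. I take $R_1 := A \cdot 1_S$, the canonical copy of $A$ inside $R$, and $R_2$ to be the subsemiring of $R$ additively generated by the monoid $S$, i.e., $R_2 = \sum^{\infty} S$. The multiplication rule $(as)(bt) = (ab)(st)$ underlying the monoid-semiring forces every element of $R_1$ to commute with every element of $R_2$, and writing an arbitrary element of $A[S]$ as a finite sum $\sum_s a_s s = \sum_s (a_s \cdot 1_S)(1_A \cdot s)$ exhibits $R = \sum^{\infty} R_1 R_2$. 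Hence the structural hypotheses of Proposition~\ref{Prop:2.13} on the pair $(R_1, R_2)$ are satisfied.

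Next I supply additive spines of $R_1$ and $R_2$ whose product, computed inside $R$, equals $N \cdot T$. The bijection $A \to R_1$, $a \mapsto a \cdot 1_S$, transports the additive spine $N$ of $A$ to an additive spine $N \cdot 1_S$ of $R_1$. For $R_2$, I claim $T$ is an additive spine. Since $R_2$ is additively generated by $S$, it suffices to show that $S$ lies in the halo $\widetilde{T}$ computed inside $R_2$; but given $s \in S$, the monoid-spine property furnishes $s_1, s_2 \in S$ with $t := s_1 s \in T$ and $s_2 t = s$, and since $s_1, s_2 \in S \subseteq R_2$, this is exactly the halo condition for $s$ with witnesses $y = s_1$, $z = s_2$.

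Proposition~\ref{Prop:2.13} then asserts that $(N \cdot 1_S) \cdot T$ is an additive spine of $R$, and since $(n \cdot 1_S)(1_A \cdot t) = n \cdot t$ by the multiplication rule, this product coincides with $N \cdot T$, completing the argument. The proof is strictly parallel to that of Theorem~\ref{Thm:2.14}, which it generalizes via Example~\ref{Examp:2.17}; I do not foresee any serious obstacle. The two points demanding minor care are the commutation of $R_1$ with $R_2$ (immediate from the defining rule $(as)(bt) = (ab)(st)$) and the pointed case in which $S$ contains an absorbing $0$: there the monoid-spine property applied to $s = 0$ forces $0 \in T$, and so the argument above handles this case uniformly, with no separate treatment needed.
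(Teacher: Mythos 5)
Your proof is correct and takes essentially the same route as the paper's: the paper sets $R_1 := C[S] = \sum^{\infty} S$ (with $C$ the image of $\mathbb{N}_0$ in $A$) and $R_2 := A \subset A[S]$, notes that $S$ lies in the halo $\widetilde{T}$ of $T$ in $R_1$, and invokes Proposition~\ref{Prop:2.13} --- precisely your decomposition with the roles of $R_1$ and $R_2$ interchanged. Your additional verifications (the commutation, the transported spine $N \cdot 1_S$, and the observation that a pointed $S$ forces $0 \in T$) merely spell out steps the paper declares obvious.
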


\begin{proof} Let $R : = A [S]$ and $R_1 : = C [S] \subset R$, with $C$ the image of the (unique) homomorphism $\mathbb{N}_0 \to A$.
It is obvious that $R_1 = \sum\limits^{\infty} S$ and that $S$ is contained in the halo $\tlT $ of $T$ in $R_1$. Thus~ $T$ is a monoid
spine of $R_1$.
(In fact it can be verified that $\tlT  = \tlS  = S$.) Let $R_2 : =
A \subset R$. Then $R = \sum\limits^{\infty} R_1 \cdot R_2$ and the
elements of $R_1$ commute with those of $R_2$. The assertion follows
from Proposition \ref{Prop:2.13}. \end{proof}

\section{Halos and additive spines in $R$-modules}\label{sec:3}
%
%Halos and additive spines can be defined and studied on any $R$-module instead of the semiring $R$ itself.
%Although at present perhaps of limited practical value,
%this will make the theory of generators of SA-submodules more transparent.

 In the following $V$ is again an $R$-module.

\begin{example}
  \label{Examp:3.2} If $S$ is a set of generators of the $R$-module $V$ and $M$ is an additive spine of~ $R$, then we know by Theorem~\ref{thm:2.9} that MS is an additive spine of $V$.
\end{example}

Theorem \ref{thm:2.9} generalizes as follows:

\begin{thm}\label{Thm:3.3} Assume that $S$ is an additive spine of an $R$-module $V$. Then every SA-submodule $W$ of $V$ is generated by $W \cap S$, and moreover $W \cap S$ is an additive spine of~$W$.\end{thm}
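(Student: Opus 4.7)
The plan is to adapt the argument of Theorem~\ref{thm:2.9}, but more directly, since we are handed $V = \sum^\infty \tlS$ at the outset and need not route through a spine of $R$. The two assertions (generation of $W$ by $W \cap S$, and $W \cap S$ being an additive spine of $W$) will come out of a single decomposition.

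First I would take $w \in W$ with $w \neq 0$ and write
\[ w = \sum_{i=1}^n v_i, \qquad v_i \in \tlS, \]
using $V = \sum^\infty \tlS$. Because $W$ is summand absorbing in $V$, the \eqref{eq:1.1} property applied iteratively (or by an immediate induction on $n$) forces $v_i \in W$ for each $i$. This is the crucial use of the SA hypothesis.

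Next, for each $i$ I invoke the definition of the halo: there exist $\lambda_i, \mu_i \in R$ with $\lambda_i v_i \in S$ and $\mu_i \lambda_i v_i = v_i$. Since $v_i \in W$ and $W$ is an $R$-submodule, $\lambda_i v_i \in W$ as well, so $\lambda_i v_i \in W \cap S$. Setting $t_i := \lambda_i v_i$, we then have
\[ w = \sum_{i=1}^n v_i = \sum_{i=1}^n \mu_i t_i, \qquad t_i \in W \cap S, \]
which exhibits $w$ as an $R$-linear combination of elements of $W \cap S$ and thus proves the first assertion.

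For the additive spine claim, observe that the halo of $W \cap S$ computed inside the module $W$ coincides with $W \cap \tlS$: indeed if $v \in W$ and $\lambda v \in S$, then automatically $\lambda v \in W$, so $\lambda v \in W \cap S$, while the condition $\mu \lambda v = v$ is the same in $W$ as in $V$. The decomposition $w = \sum v_i$ above already has $v_i \in W \cap \tlS$, so $W = \sum^\infty (W \cap \tlS)$, which is exactly the statement that $W \cap S$ is an additive spine of $W$. There is no genuine obstacle here; the only point requiring a moment of care is the identification of the halo of $W \cap S$ taken in $W$ with $W \cap \tlS$, which relies on $W$ being closed under scalar multiplication.
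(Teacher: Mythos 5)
Your argument is correct and follows essentially the same route as the paper's proof: the same decomposition $w=\sum v_i$ with $v_i\in\tlS$, the same use of the SA property to put each $v_i$ in $W$, and the same choice of $\lambda_i,\mu_i$ to land $t_i=\lambda_i v_i$ in $W\cap S$ and conclude both generation and the spine property (your identification of the halo of $W\cap S$ in $W$ with $W\cap\tlS$ is the content of Proposition~\ref{Prop:3.4}). No gaps; only the presentation differs slightly, since the paper first notes that any additive spine generates the module and then specializes to $W$.
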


 \begin{proof}a) We first verify that $V$ itself is generated by $S$. Since $V$ is additively generated by~$\tlS $,  for given nonzero $v \in V$ we have
\begin{equation}\label{eq:b.1}
v = \sum\limits_{i = 1}^n v_i \tag{A}, \end{equation}
with $n \in \mathbb{N}$, $v_i \in \tlS $. There exist $\lambda_i, \mu_i \in R$ such that
\begin{equation}\label{eq:b.2}
s_i : = \lambda_i v_i \in S, \tag{B}\end{equation}
\begin{equation}\label{eq:b.3}
v_i = \mu_i s_i, \tag{C}\end{equation}
and so by \eqref{eq:b.1}
\begin{equation*}\label{eq:4}
v = \sum\limits_{i = 1}^n \mu_i s_i, \end{equation*}
and we are done.

\pSkip \noi b) If now $W$ is an SA-submodule of $V$, and the above
element $v$ lies in $W$, then in Equation~\eqref{eq:b.1} all
summands $v_i$ are in $W$, and so the $s_i$ from \eqref{eq:b.2}  are
in $W \cap S$. We conclude
 from~\eqref{eq:b.2}  and~\eqref{eq:b.3}  that all $v_i$ are in the halo $(W \cap S)^{\sim}$ of $W \cap S$ in $W$, and we infer from~\eqref{eq:b.1}  that $W$ is additively generated by $(W \cap S)^{\sim}$, i.e., $W \cap S$ is an additive spine of $W$. As proved in a) the set $W \cap S$ generates the $R$-module $W$. \end{proof}

We write down a chain of propositions which turn out to be useful in
working with halos and additive spines. For clarity we sometimes
denote the halo of a set $S$ in $V$ more elaborately by $\hal_V (S)$
instead of $\tlS $.

\begin{prop}
\label{Prop:3.4} If $S$ is a subset of an $R$-module $V$ and $W$ a submodule of $V$, then
\[ W \cap \hal_V (S) = \hal_W (W \cap S) = \hal_V (W \cap S). \]
\end{prop}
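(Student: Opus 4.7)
The plan is to prove the two equalities by unwinding the definition of the halo and exploiting that $W$, being a submodule, is closed under the $R$-action. All three sets are defined by the existence of scalars $\lambda,\mu\in R$ satisfying $\lambda v\in(\,\cdot\,)$ and $\mu\lambda v=v$; the differences are only which ambient module we sit in ($V$ or $W$) and which subset ($S$ or $W\cap S$) we require $\lambda v$ to land in. The job is therefore to track how these four conditions shuffle.

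First I would dispatch the middle equality $\hal_W(W\cap S)=\hal_V(W\cap S)$. The inclusion $\subseteq$ is immediate: if $v\in W$ and $\lambda,\mu\in R$ witness $v\in\hal_W(W\cap S)$, then the same $\lambda,\mu$ witness $v\in\hal_V(W\cap S)$, since $W\cap S\subseteq V$ and the defining equations are identical. For the reverse inclusion, the key observation is that any $v\in\hal_V(W\cap S)$ automatically lies in $W$: indeed, writing $v=\mu(\lambda v)$ with $\lambda v\in W\cap S\subseteq W$, we have $v\in R\cdot W=W$. Once $v\in W$, the same scalars show $v\in\hal_W(W\cap S)$.

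Next I would handle $W\cap\hal_V(S)=\hal_W(W\cap S)$. For $\supseteq$, pick $v\in\hal_W(W\cap S)$ with witnesses $\lambda,\mu$; then $\lambda v\in W\cap S\subseteq S$ and $\mu\lambda v=v$, so $v\in\hal_V(S)$, and by hypothesis $v\in W$. For $\subseteq$, take $v\in W\cap\hal_V(S)$ with witnesses $\lambda,\mu$ satisfying $\lambda v\in S$ and $\mu\lambda v=v$. The only gap is that a priori $\lambda v$ is only known to lie in $S$, not in $W\cap S$; but since $v\in W$ and $W$ is a submodule, $\lambda v\in W$, so $\lambda v\in W\cap S$, and the same $\lambda,\mu$ certify $v\in\hal_W(W\cap S)$.

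I expect no real obstacle; the proof is essentially a two-line manipulation once one notices the two submodule-closure facts: \emph{(a)} $W$ absorbs $R$-multiplication from above, so $\lambda v\in S$ and $v\in W$ together force $\lambda v\in W\cap S$; and \emph{(b)} $W$ absorbs $R$-multiplication from below, so $\lambda v\in W$ and $v=\mu(\lambda v)$ together force $v\in W$. Apart from these, everything is a tautology of the halo definition. The whole argument fits in a short paragraph and needs no hypothesis on $S$ beyond being a subset of $V$.
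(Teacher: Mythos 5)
Your proof is correct and uses exactly the same ingredients as the paper's: unwinding the halo definition together with the two submodule-closure facts ($v\in W\Rightarrow\lambda v\in W$ and $\lambda v\in W\Rightarrow v=\mu\lambda v\in W$). The only cosmetic difference is that the paper arranges the argument as a cycle of three inclusions, whereas you verify the two equalities separately (four inclusions), which changes nothing of substance.
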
\begin{proof} Let $v \in \hal_V (S)$ be given. We choose $\lambda, \mu \in R$ with $\lambda v = s \in S$ and $\mu s = v$. If now $v \in W$ then $\lambda v = s \in W \cap S$, and so $v \in \hal_W (W \cap S)$. This proves that
\begin{equation}\label{eq:1}
W \cap \hal_V (S) \subset \hal_W (W \cap S). \tag{A}\end{equation}

\noi
Trivially
\begin{equation}\label{eq:30}
\hal_W (W \cap S) \subset \hal_V (W \cap S). \tag{B}\end{equation}

\noi
If $v \in \hal_V (W \cap S)$, then there exist $\lambda, \mu \in R$ with $\lambda v = s \in W \cap S$ and $\mu s = v$. It follows that $v \in W \cap \hal_V (S)$. This proves
\begin{equation}\label{eq:3}
\hal_V (W \cap S) \subset W \cap \hal_V (S). \tag{C} \end{equation}
\eqref{eq:1}--\eqref{eq:3} together imply the assertion of the proposition. \end{proof}

In   case $S \subset W$ the proposition reads as follows:

\begin{cor}
\label{Cor:3.5} Let $S \subset V$. Then the halo of $S$ in any
submodule $W \supset S$ of $V$ coincides with the halo of $S$ in
$V$. \end{cor}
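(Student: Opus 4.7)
The plan is to obtain Corollary \ref{Cor:3.5} as a direct specialization of Proposition \ref{Prop:3.4}. Under the hypothesis $S \subset W$, the intersection $W \cap S$ simply equals $S$, so the chain of equalities
\[ W \cap \hal_V(S) = \hal_W(W \cap S) = \hal_V(W \cap S) \]
from Proposition \ref{Prop:3.4} collapses to
\[ W \cap \hal_V(S) = \hal_W(S) = \hal_V(S), \]
and the middle equality $\hal_W(S) = \hal_V(S)$ is precisely the assertion of the corollary.

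For transparency I would also point out the underlying geometric reason why $\hal_V(S)$ already sits inside $W$ when $S \subset W$: if $v \in \hal_V(S)$, there exist $\lambda,\mu \in R$ with $\lambda v = s \in S$ and $v = \mu s$; since $s \in S \subset W$ and $W$ is an $R$-submodule, we get $v = \mu s \in W$. Thus $\hal_V(S) \subset W$, which is what makes the intersection with $W$ superfluous and forces $\hal_V(S) = W \cap \hal_V(S) = \hal_W(S)$.

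Since the statement is a pure specialization of the preceding proposition, there is no genuine obstacle; the main point is to make explicit that the hypothesis $S \subset W$ replaces $W \cap S$ by $S$ in both sides of the relevant identity, and to remind the reader (either by a one-line invocation of Proposition \ref{Prop:3.4} or by the short direct argument above) why the halo cannot escape from $W$ once its ``anchor'' $s = \lambda v$ lies in $W$.
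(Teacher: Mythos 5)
Your proposal is correct and matches the paper's approach: the corollary is obtained exactly as the specialization of Proposition \ref{Prop:3.4} to the case $S \subset W$, where $W \cap S = S$. Your added remark that $\hal_V(S) \subset W$ (since $v = \mu s$ with $s \in S \subset W$) is a correct and harmless elaboration of why the intersection with $W$ disappears.
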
 \noi Thus in practice the notation $\hal_V (S)$
instead of $\tlS $ is rarely needed.

\begin{prop}\label{Prop:3.6} Let $(V_i \ds \vert i \in I)$ be a family of submodules of the $R$-module $V$ and assume that for every $i \in I$ there is given a set $S_i \subset V_i$.

\begin{enumerate}\dispace
\item[a)] Then
\[ \bigcup\limits_{i \in I} \hal_{V_i} (S_i) = \hal_V \bigg( \bigcup\limits_{i \in I} S_i \bigg). \]

\item[b)] If $\sum\limits_{i \in I} V_i = V$ and each $S_i$ is an additive spine of $V_i$, then $\bigcup\limits_{i \in I} S_i$ is an additive spine of~$V$.
    \end{enumerate}
\end{prop}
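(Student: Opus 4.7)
The plan is to deduce (a) directly from Corollary~\ref{Cor:3.5} together with a transparent check on the defining property of the halo, and then to deduce (b) from (a) together with the definition of an additive spine.

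For part (a), the first move will be to invoke Corollary~\ref{Cor:3.5}: since $S_i \subset V_i$ and $V_i$ is a submodule of $V$, we have $\hal_{V_i}(S_i) = \hal_V(S_i)$ for every $i \in I$. The inclusion $\bigcup_{i} \hal_V(S_i) \subset \hal_V(\bigcup_{i} S_i)$ is then immediate from monotonicity of the halo in its argument (the obvious module analogue of Remarks~\ref{rem:2.4}.ii, which is a one-line consequence of the definition). For the reverse inclusion, I take $v \in \hal_V(\bigcup_{i} S_i)$, choose $\lambda, \mu \in R$ with $\lambda v \in \bigcup_{i} S_i$ and $\mu \lambda v = v$, and pick $i_0 \in I$ with $\lambda v \in S_{i_0}$. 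The same witnesses $\lambda, \mu$ then show $v \in \hal_V(S_{i_0}) = \hal_{V_{i_0}}(S_{i_0})$.

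For part (b), I must show that every $v \in V$ is a finite sum of elements of $\hal_V(\bigcup_i S_i)$. By the hypothesis $\sum_{i \in I} V_i = V$, I can write $v = \sum_{k=1}^n v_{i_k}$ with $v_{i_k} \in V_{i_k}$. Because $S_{i_k}$ is an additive spine of $V_{i_k}$, each $v_{i_k}$ is itself a finite sum of elements of $\hal_{V_{i_k}}(S_{i_k})$; by (a), each such summand lies in $\hal_V(\bigcup_i S_i)$. Concatenating the two layers of finite sums exhibits $v$ as a finite sum of elements of $\hal_V(\bigcup_i S_i)$, which is exactly the condition that $\bigcup_i S_i$ be an additive spine of $V$.

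The only delicate point — and it is only mildly delicate — is to make sure that the halo witnesses chosen in the ambient module $V$ actually certify membership in the smaller halo $\hal_{V_{i_0}}(S_{i_0})$. This is handled once and for all by Corollary~\ref{Cor:3.5}, which guarantees $\hal_V(S_i) \subset V_i$ whenever $V_i$ is a submodule containing $S_i$, so no separate verification of containment in $V_{i_0}$ is needed. Everything else is a direct unwinding of definitions.
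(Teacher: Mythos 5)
Your proposal is correct and follows essentially the same route as the paper: part (a) by combining Corollary~\ref{Cor:3.5} with the union-compatibility of halos (the module analogue of Remark~\ref{rem:2.4}.iii, which you simply spell out rather than cite), and part (b) by concatenating the finite sums coming from the decomposition $V=\sum_{i\in I}V_i$ and the spines $S_i$. No gaps; the paper's proof is just a terser version of the same argument.
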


\begin{proof} Let $S : = \bigcup\limits_{i \in I} S_i$.

\noi
a): We have $\hal_V (S) = \bigcup\limits_{i \in I} \hal_V (S_i)$ in complete analogy to Remark \ref{rem:2.4}.iii. Furthermore  $\hal_V (S_i) = \hal_{V_i} (S_i)$ by Corollary \ref{Cor:3.5}.

\noi
b): Let $\tlS _i : = \hal_{V_i} (S_i)$. Then $\bigcup\limits_{i \in I} \tlS _i = \tlS $, $\sum\limits^{\infty} \tlS _i = V_i$, and so
\[ \sum\limits^{\infty} \tlS  = \sum\limits_{i \in I} \bigg( \sum\limits^{\infty} \tlS _i \bigg) = \sum\limits_{i \in I} V_i = V. \] \vskip -6mm
\end{proof}

We now have a good hold on all additive spines of a free $R$-module
as follows:

\begin{prop}\label{Prop:3.7} Assume that $V$ is a free $R$-module with base $(v_i \ds \vert i \in I)$. Then every additive spine $S$ of $V$ has the form
\[ S = \bigcup\limits_{i \in I} M_i v_i, \]
where  every $M_i$ is an additive spine of $R$, as defined in \S\ref{sec:2}.\end{prop}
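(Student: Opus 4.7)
The plan is to extract the $M_i$ directly from $S$: for each $i \in I$ set $M_i := \{r \in R : r v_i \in S\}$, and then establish both that every $M_i$ is an additive spine of $R$ and that $S = \bigcup_{i \in I} M_i v_i$.

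First I would show that each coordinate submodule $V_i := R v_i$ is an SA-submodule of $V$. The direct-sum decomposition $V = \bigoplus_i R v_i$ together with the LZS property that stands in the background of the paper gives this at once: if $x + y \in R v_i$, then uniqueness of the free expansion forces the $v_j$-components of $x + y$ to vanish for $j \neq i$, and LZS then forces the $v_j$-components of $x$ and $y$ individually to vanish, so $x, y \in V_i$. With each $V_i$ identified as SA, Theorem~\ref{Thm:3.3} applied to the additive spine $S$ and the SA-submodule $V_i$ yields that $V_i \cap S$ is an additive spine of $V_i$. Uniqueness of the free expansion identifies $V_i \cap S$ with $M_i v_i$, and transporting across the canonical $R$-module isomorphism $R \cong V_i$, $r \mapsto r v_i$, delivers that $M_i$ is an additive spine of $R$.

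The remaining task is the equality $S = \bigcup_i M_i v_i$, and this is where I expect the real difficulty. The inclusion $\supseteq$ is immediate from the definition of $M_i$, but the reverse requires $S \subseteq \bigcup_i R v_i$, i.e., that every element of $S$ has single-coordinate support in the free basis. Given $s = \sum_j r_j v_j \in S$, the natural attempt is to use $s \in \tlS $ to obtain $\lambda, \mu \in R$ with $\lambda s \in S$ and $\mu \lambda s = s$, then read the relation coordinatewise in the basis and try to force the support of $s$ down to a single index. This last reduction is the crux: a priori an additive spine could contain a genuinely mixed element, so the argument will likely rely on either an irredundancy/minimality convention implicit in the paper's notion of additive spine, or a careful combination of Propositions~\ref{Prop:3.4} and~\ref{Prop:3.6} applied to the family $(V_i)$ together with the halo-idempotency $\widetilde{\tlS } = \tlS $, alongside the coordinate analysis just described.
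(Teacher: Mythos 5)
You have correctly located the crux, and unfortunately the gap you flag cannot be closed: nothing forces an additive spine of a free module to be contained in $\bigcup_{i} R v_i$. Since $S \subset S'$ implies $\widetilde{S} \subset \widetilde{S'}$ (the module analogue of Remark \ref{rem:2.4}.ii), any superset of an additive spine is again an additive spine. So for $R=\mathbb{N}_0$ and $V$ free on $v_1,v_2$, the set $\{v_1,\,v_2,\,v_1+v_2\}$ is an additive spine of $V$ containing an element of mixed support, and it is not of the form $\bigcup_i M_i v_i$; no irredundancy or minimality convention in the paper rescues the reduction you hope for. A secondary problem: your claim that each $Rv_i$ is SA in $V$ invokes LZS, which is not a hypothesis anywhere in Sections \ref{sec:2}--\ref{sec:4} (rings are semirings, and for $R=\mathbb{Z}$, $V=\mathbb{Z}^2$ the submodule $Rv_1$ is not SA), so even the first half of your argument rests on an assumption the proposition does not grant --- although the transport of spines along the isomorphism $R \cong Rv_i$, $r \mapsto rv_i$, is itself sound.

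What the paper actually proves is the opposite implication: it reads the proposition as asserting that every set of the form $\bigcup_i M_i v_i$, with each $M_i$ an additive spine of $R$, \emph{is} an additive spine of $V$. Its proof is two lines: $V=\bigoplus_i V_i$ with $V_i = Rv_i \cong R$, each $M_i v_i$ is an additive spine of $V_i$ by transport along $r \mapsto rv_i$, and Proposition \ref{Prop:3.6}.b) then says that the union $\bigcup_i M_i v_i$ is an additive spine of $V=\sum_i V_i$. So rather than attempting the characterization (``only if'') direction, which fails as literally stated, you should prove the construction (``if'') direction via Proposition \ref{Prop:3.6}; the isomorphism argument you already have is exactly the right ingredient for that.
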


\begin{proof} We have $V = \bigoplus\limits_{i \in I} V_i$ with $V_i = R v_i \cong\; R$. The claim follows from Proposition \ref{Prop:3.6}. \end{proof}

\begin{prop}[Functoriality of halos and additive spines]
\label{Prop:3.8} Let $\varphi : V \to V'$ be an $R$-linear map between $R$-modules.
\begin{itemize}
\item[a)] If $S$ is a subset of $V$, then
\[ \varphi (\tlS ) \subset \varphi (S)^{\sim}. \]

\item[b)] If $S$ is an additive spine of $V$, then the $R$-module $\varphi (V)$ is additively generated by $\varphi (\tlS )$, and so $\varphi (S)$ is an additive spine of $\varphi (V)$.
\end{itemize}
\end{prop}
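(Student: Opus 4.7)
The plan is to unwind the definition of halo and additive spine, then push them through the $R$-linear map $\varphi$ using the fact that $\varphi(\lambda v) = \lambda \varphi(v)$. Both parts should follow essentially by direct computation; I do not expect any serious obstacle.

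For part (a), I would start with an arbitrary $v \in \tlS$ and invoke Definition \ref{Def:3.1}.a to obtain $\lambda, \mu \in R$ with $\lambda v \in S$ and $\mu \lambda v = v$. Applying $\varphi$ and using $R$-linearity gives $\lambda \varphi(v) = \varphi(\lambda v) \in \varphi(S)$ and $\mu \lambda \varphi(v) = \varphi(\mu \lambda v) = \varphi(v)$, so $\varphi(v) \in \varphi(S)^{\sim}$ (the halo taken in $V'$, which by Corollary \ref{Cor:3.5} does not depend on whether we form it in $\varphi(V)$ or $V'$). This gives the desired inclusion $\varphi(\tlS) \subset \varphi(S)^{\sim}$.

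For part (b), I would take any $v' \in \varphi(V)$, write $v' = \varphi(v)$ for some $v \in V$, and use that $S$ is an additive spine of $V$ to express $v = \sum_{i=1}^{n} v_i$ with $v_i \in \tlS$. Applying $\varphi$ yields $v' = \sum_{i=1}^n \varphi(v_i)$, and by part (a) each $\varphi(v_i)$ lies in $\varphi(S)^{\sim}$. Thus $\varphi(V)$ is additively generated by $\varphi(\tlS)$, and in particular by $\varphi(S)^{\sim}$, which is precisely the statement that $\varphi(S)$ is an additive spine of $\varphi(V)$.

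The main conceptual point to be careful about is that the halo on the right-hand side of (a) is formed in $V'$ (or equivalently, by Corollary \ref{Cor:3.5}, in any submodule containing $\varphi(S)$, e.g. $\varphi(V)$), so that the scalars $\lambda, \mu$ inherited from $V$ are automatically valid witnesses. Apart from this bookkeeping, the proof is a routine transport of the halo/spine witnesses through $\varphi$.
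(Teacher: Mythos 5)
Your proposal is correct and follows essentially the same route as the paper: part (a) by transporting the witnesses $\lambda,\mu$ through $\varphi$, and part (b) by applying $\varphi$ to an additive decomposition $v=\sum v_i$ with $v_i\in\tlS$ and invoking (a), with the same appeal to Corollary \ref{Cor:3.5} to settle where the halo $\varphi(S)^{\sim}$ is formed (the paper phrases this as replacing $V'$ by $\varphi(V)$, i.e.\ assuming $\varphi$ surjective).
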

 \begin{proof}
 a): Let $x \in \tlS $. We have $\lambda, \mu \in R$ with $\lambda x = s \in S$, $\mu s = x$. It follows that $\lambda \varphi (x) = \varphi (s)$, $\mu \varphi (s) = \varphi (x)$, whence $\varphi (x) \in \varphi (S)^{\sim}$.

\pSkip
\noi
b): By Corollary \ref{Cor:3.5} we may replace $V$ by $\varphi (V)$, and so assume that $\varphi$ is surjective. We have $\sum\limits^{\infty} \tlS  = V$. Applying $\varphi$, we obtain
\[ \sum\limits^{\infty} \varphi (\tlS ) = \varphi (V). \]
It follows by a) that $\sum\limits^{\infty} \varphi (S)^{\sim} = \varphi (V)$. \end{proof}
\begin{cor}
\label{Cor:3.9} Assume that $R$ and $T$ are semirings and $V$ is an $(R,
T)$-bimodule, i.e., $V$ is a left $R$-module, a right $T$-module,
and
\[ \forall \, \lambda \in R, \mu \in T, v \in V : \ (\lambda v) \mu = \lambda (v \mu). \]
Let $S$ be a subset of $V$. As before let $\tlS $ denote the halo of
$S$ in $_R V$ (which means $V$ as a left $R$-module). Then, for any
$t \in T$
\[ \tlS  t \subset (St)^{\sim}. \]
If $S$ is an additive spine of $V$, then $\tlS t$ generates the left $R$-module $Vt$ additively, and so $St$ is an additive spine of $Vt$.
\end{cor}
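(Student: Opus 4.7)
The plan is to recognize this corollary as a direct application of the functoriality result (Proposition~\ref{Prop:3.8}) to a well chosen $R$-linear map induced by right multiplication by $t$.

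First I would fix $t \in T$ and define the map $\varphi_t : V \to V$ by $\varphi_t(v) := vt$. I must check that $\varphi_t$ is a homomorphism of left $R$-modules. Additivity is immediate from distributivity of the right $T$-action, and the $R$-linearity $\varphi_t(\lambda v) = (\lambda v)t = \lambda(vt) = \lambda \varphi_t(v)$ is precisely the bimodule axiom stated in the hypothesis. Its image is $\varphi_t(V) = Vt$, and $\varphi_t(S) = St$.

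Then Proposition~\ref{Prop:3.8}.a applied to $\varphi_t$ yields
\[ \tlS\, t = \varphi_t(\tlS) \subset \varphi_t(S)^{\sim} = (St)^{\sim}, \]
which is the first assertion. For the second assertion, assume $S$ is an additive spine of $V$. Proposition~\ref{Prop:3.8}.b gives that $\varphi_t(V) = Vt$ is additively generated by $\varphi_t(\tlS) = \tlS t$, and that $\varphi_t(S) = St$ is an additive spine of $Vt$ (viewed as a left $R$-module, which is legitimate since $Vt$ is closed under the left $R$-action by the bimodule law).

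There is essentially no obstacle here: the only point requiring a moment's care is confirming that right multiplication by a fixed $t \in T$ is a morphism of \emph{left} $R$-modules, and this is exactly what the bimodule compatibility $(\lambda v)\mu = \lambda(v\mu)$ provides. Everything else is a citation of Proposition~\ref{Prop:3.8}.
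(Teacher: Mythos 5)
Your proposal is correct and is exactly the paper's argument: the paper's proof consists of applying Proposition~\ref{Prop:3.8} to the endomorphism $v \mapsto vt$ of $_R V$, which is what you do, with the additional (welcome) verification that the bimodule law makes this map $R$-linear.
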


\begin{proof} Apply Proposition \ref{Prop:3.8} to the endomorphism $v \mapsto vt$ of $_R V$. \end{proof}

\begin{cor}\label{Cor:3.10} If again $V$ is an $(R, T)$-bimodule and $t$ is a unit of $T$, then $\tlS  t = (St)^{\sim}$, and ~$S$ is an additive spine of $V$ iff $St$ is an additive spine of $V$.\end{cor}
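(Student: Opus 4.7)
The plan is to split the corollary into two claims: (a) the equality $\tlS t = (St)^{\sim}$ of halos, and (b) the equivalence of $S$ and $St$ being additive spines of $V$. Throughout, the key observation is that when $t$ is a unit of $T$ with inverse $t^{-1}$, right multiplication by $t$ is an $R$-linear bijection $V \to V$ with inverse right multiplication by $t^{-1}$; in particular $Vt = V$.

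For part (a), the inclusion $\tlS t \subset (St)^{\sim}$ is already given by Corollary~\ref{Cor:3.9}. For the reverse inclusion, I would take $w \in (St)^{\sim}$ and produce witnesses $\lambda, \mu \in R$, $s \in S$ with $\lambda w = st$ and $\mu (st) = w$. Setting $v := w t^{-1}$, I would verify directly from the bimodule axiom that $\lambda v = (\lambda w) t^{-1} = (st) t^{-1} = s \in S$ and $\mu s = (\mu s) \cdot 1_T = \mu s t t^{-1} = w t^{-1} = v$, so $v \in \tlS$ and $w = v t \in \tlS t$. The only subtle point is keeping the left/right actions correctly associated, which is handled by the bimodule compatibility.

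For part (b), the forward direction follows from Corollary~\ref{Cor:3.9}: if $S$ is an additive spine of $V$, then $St$ is an additive spine of $Vt$, and $Vt = V$ since $t$ is a unit. The converse is obtained by applying Corollary~\ref{Cor:3.9} again, this time to $St$ in place of $S$ and $t^{-1}$ in place of $t$: if $St$ is an additive spine of $V$, then $(St) t^{-1}$ is an additive spine of $V t^{-1} = V$, and $(St) t^{-1} = S \cdot 1_T = S$ by the bimodule axiom.

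The main obstacle is only bookkeeping: to carry out part (a), one must carefully juggle the commuting left $R$-action and right $T$-action to push $t^{-1}$ past $\lambda$ and $\mu$ and land the witnesses inside $S$ rather than $St$. Once that is done, part (b) is essentially a symmetry argument using $t \leftrightarrow t^{-1}$, and no further technical work is needed.
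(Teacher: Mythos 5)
Your proof is correct and takes essentially the same route as the paper: the paper also exploits the $t \leftrightarrow t^{-1}$ symmetry, obtaining $(St)^{\sim} \subset \tlS t$ by applying Corollary~\ref{Cor:3.9} to $St$ with $u = t^{-1}$ and multiplying by $t$, whereas you simply unwind that step into a direct element computation with witnesses $\lambda,\mu$. Your treatment of the spine equivalence (Corollary~\ref{Cor:3.9} applied to $S,t$ and then to $St,t^{-1}$, using $Vt=V$) is just a repackaging of the paper's concluding observation that $\sum^{\infty}(St)^{\sim} = \bigl(\sum^{\infty}\tlS\bigr)t$.
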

\begin{proof} Let $u : = t^{-1}$. Then by Corollary \ref{Cor:3.9} $(\tlS  t) u \subset (St)^{\sim} u \subset (Stu)^{\sim} = \tlS $.
Multiplying by ~$t$, we obtain $\tlS  t \subset (St)^{\sim} \subset \tlS t$, whence $\tlS  t = (St)^{\sim}$, and then
\[ \sum\limits^{\infty} (St)^{\sim} = \bigg( \sum\limits^{\infty} \tlS  \bigg) t. \] \vskip -6mm
\end{proof}

\begin{examp}\label{Exam:3.11} $R$ is an $(R, R)$-bimodule in the obvious way. Thus, if $M$ is an additive spine of $R$
 and if $u$ is a unit of $R$, then $M u$ is again an additive spine of $R$. \end{examp}

\begin{examp}\label{Exam:3.12} Assume that $C$ is a semiring that is a homomorphic image of $\mathbb{N}_0$, and $R : = M_n (C)$. We have seen in Example~2.12 that $\{ e_{11}, \dots, e_{nn} \}$ is an additive spine of $R$. Let $\sigma \in S_n$.
Then $u : = \sum\limits_{i = 1}^n e_{i \sigma_{(i)}}$ is a unit of $R$, namely $u$ is the permutation matrix of $\sigma^{-1}$. We have $e_{ii} u = e_{i \sigma (i)}$, and conclude that $\{ e_{1 \sigma (1)}, \dots, e_{n \sigma (n)} \}$
is an additive spine of $M_n (C)$.
\end{examp}
We can generalize Proposition \ref{Prop:2.13} as follows:

\begin{prop}
\label{Prop:3.13} Assume that $R_1, R_2$ are commuting subsemirings
of a semiring $R$ with $R = \sum\limits^{\infty} R_1 R_2$, and that
$V_1, V_2$ are left modules over $R_1$ and $R_2$ respectively.
Assume furthermore that there is given a composition $V_1 \times V_2
\stackrel{\bullet}{\longrightarrow} V$ such that
\[ (\lambda_1 \lambda_2) (v_1 \bullet v_2) = (\lambda_1 v_1) \bullet (\lambda_2 v_2) \]
for any $\lambda_i \in R$, $v_i \in V_i \, (i = 1,2)$. Assume finally that $V = \sum\limits^{\infty} V_1 \bullet V_2$. Then, given subsets $S_i \subset V_i$ with halos $\tlS _i$ in the $R_i$-module $V_i \, (i = 1,2)$, the following holds.

\begin{enumerate}\ealph \dispace
 \item[a)] $\tlS _1 \bullet \tlS _2$ is contained in the halo $(S_1 \bullet S_2)^{\sim}$ of $S_1 \bullet S_2$ in $V$.

\item[b)]  If $S_i$ is an additive spine of $V_i \, (i = 1,2)$, then
\[ V = \sum\limits^{\infty} \tlS _1 \bullet \tlS _2 \]
and $S_1 \bullet S_2$ is an additive spine of $V$.
    \end{enumerate}

\end{prop}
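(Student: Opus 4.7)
The plan is to adapt the proof of Proposition~\ref{Prop:2.13} almost verbatim, with the multiplication $R_1\cdot R_2\subseteq R$ replaced by the pairing $V_1\bullet V_2\subseteq V$. The compatibility identity $(\lambda_1\lambda_2)(v_1\bullet v_2)=(\lambda_1 v_1)\bullet(\lambda_2 v_2)$ plays exactly the role that the mixed associativity/commutativity of the product in $R$ played there, so the halo-witnessing computation transcribes directly.

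For part a), given $v_i\in\tlS_i$ I would pick witnesses $\lambda_i,\mu_i\in R_i$ with $s_i:=\lambda_i v_i\in S_i$ and $\mu_i s_i=v_i$. A single application of the compatibility identity in each direction yields
\[ (\lambda_1\lambda_2)(v_1\bullet v_2)=(\lambda_1 v_1)\bullet(\lambda_2 v_2)=s_1\bullet s_2\in S_1\bullet S_2, \]
\[ (\mu_1\mu_2)(s_1\bullet s_2)=(\mu_1 s_1)\bullet(\mu_2 s_2)=v_1\bullet v_2, \]
with $\lambda_1\lambda_2,\mu_1\mu_2\in R_1R_2\subseteq R$. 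This is precisely the defining condition for $v_1\bullet v_2\in(S_1\bullet S_2)^\sim$, proving a).

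For part b), the hypotheses give $V_i=\sum^\infty\tlS_i$ and $V=\sum^\infty V_1\bullet V_2$. Distributing the pairing over finite sums in each slot collapses these into $V=\sum^\infty\tlS_1\bullet\tlS_2$, and part a) upgrades each such summand to an element of $(S_1\bullet S_2)^\sim$. Hence $V=\sum^\infty(S_1\bullet S_2)^\sim$, i.e., $S_1\bullet S_2$ is an additive spine of~$V$.

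The only point requiring care is the distribution step in b): the stated axiom records scalar compatibility of $\bullet$, not overt biadditivity in each argument. I expect the latter is tacitly part of the setup, just as the distributive law in $R$ is tacit in Proposition~\ref{Prop:2.13}; in the intended applications (tensor-type products, module pairings, the pairing coming from the $R$-bimodule structure) it is automatic. Once biadditivity of $\bullet$ is granted, both parts are a direct transcription of Proposition~\ref{Prop:2.13}, and the halo computation in a) is otherwise purely formal.
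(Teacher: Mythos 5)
Your proof is essentially the paper's own argument: the same halo-witness computation with $\lambda_1\lambda_2$ and $\mu_1\mu_2$ for part a), and the same inclusion $\sum^{\infty}(\tlS_1\bullet\tlS_2)\supset\bigl(\sum^{\infty}\tlS_1\bigr)\bullet\bigl(\sum^{\infty}\tlS_2\bigr)=V_1\bullet V_2$ for part b). The biadditivity of $\bullet$ that you flag is indeed used tacitly in the paper's proof at exactly that distribution step, so your caveat matches the intended reading rather than indicating a gap.
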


\begin{proof} Let $v_i \in \tlS _i \, (i = 1,2)$. We have $\lambda_i, \mu_i \in R_i$ with $\lambda_i v_i = s_i \in S_i$, $\mu_i s_i = v_i$. Now
\[ (\lambda_1 \lambda_2) (v_1 \bullet v_2) = (\lambda_1 v_1) \bullet (\lambda_2 v_2) = s_1 \bullet s_2 \]

\noi
and $(\mu_1 \mu_2) (s_1 \bullet s_2) = (\mu_1 s_1) \bullet (\mu_2 s_2) = v_1 \bullet v_2$. This proves that $\tlS _1 \bullet \tlS _2 \subset (S_1 \bullet S_2)^{\sim}$. If now $\sum\limits^{\infty} \tlS _i = V_i \, (i = 1,2)$, then
\[ \sum\limits^{\infty} (\tlS _1 \bullet \tlS _2) \supset \bigg( \sum\limits^{\infty} \tlS _1 \bigg) \bullet \bigg( \sum\limits^{\infty} \tlS _2 \bigg) = V_1 \bullet V_2, \]

\noi
and so $\sum\limits^{\infty} (\tlS _1 \bullet \tlS _2) \supset \sum\limits^{\infty} V_1 \bullet V_2 = V$, whence $\sum\limits^{\infty} \tlS _1 \bullet \tlS _2 = V$. A fortiori $\sum\limits^{\infty} (S_1 \bullet S_2)^{\sim} = V$. \end{proof}

Note that Proposition \ref{Prop:2.13} is indeed a special case of
this proposition:  Given an $R$-module~ $V$, take $R_1 = R_2 = R$,
$V_1 = R$, $V_2 = V$ and the scalar product $R \times V \to V$.

\section{The posets $\SA (V)$, $\Sigma \SA (V)$ and $\Sigma_f \SA_f$ in good cases}\label{sec:4}
%In the whole section is $V$ a module over a semiring $R$.

Assume  that $R$ has a finite additive spine $M$ consisting of $m := \vert M \vert$ elements. We have seen in \S\ref{sec:2} that, when $S$ is a set of generators of $V$, then every $W \in \SA (V)$ is generated by the set $W \cap (MS)$. Thus, if $s : = \vert S \vert$ is finite, we see
that the lattice $\SA (V)$ is finite, consisting of at most $2^{m|S|}$ elements. More generally we have the following fact.

\begin{thm}\label{Thm:4.1} Assume that $V_0$ is a submodule of an $R$-module $V$ and $S$ is a subset of $V$, such that $V$ is generated over $V_0$ by $S$, i.e.,
\begin{equation}\label{eq:4.1}
  V = V_0 + \sum\limits^{\infty} RS.
\end{equation}
Assume that $R$ has a finite additive spine $M$ consisting of $m :=
\vert M \vert$ elements.

Let $W_0 \in \SA (V)$ be given with $W_0 \subset V_0$, and consider the set
\begin{equation}\label{eq:4.2}
\SA (V;W_0, V_0): = \{ W \in \SA (V) \ds \vert W \cap V_0 = W_0 \}.
\end{equation}
Then, if $s := \vert S \vert$ is finite, this set $\SA (V;W_0, V_0)$
consists of at most $2^{ms}$ elements. Furthermore, any chain $W_0
\subsetneqq W_1 \subsetneqq \dots \subsetneqq W_r$ in $\SA (V;W_0,
V_0)$ has length $r \leq ms$.
\end{thm}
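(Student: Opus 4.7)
The strategy is to mimic the proof of Theorem~\ref{thm:2.9} in this relative setting, showing that each $W \in \SA(V;W_0,V_0)$ is completely determined, as an $R$-submodule, by $W_0$ together with the intersection $W \cap (MS)$. The two numerical assertions then follow by purely combinatorial bookkeeping inside the power set $2^{MS}$.

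First I would show that every $W \in \SA(V;W_0,V_0)$ is generated over $R$ by $W_0 \cup (W \cap (MS))$. Given a nonzero $w \in W$, combining the hypothesis \eqref{eq:4.1} with $R = \sum^{\infty} \tlM$ produces a representation
\[ w \;=\; v_0 + \sum_{i=1}^n x_i s_i, \qquad v_0 \in V_0,\ x_i \in \tlM,\ s_i \in S. \]
Iterated application of the SA-property \eqref{eq:1.1} to $W$---first splitting off $v_0$ from $\sum x_i s_i$, then repeatedly splitting the remaining sum---forces $v_0 \in W \cap V_0 = W_0$ and $x_i s_i \in W$ for each $i$. Now, exactly as in the proof of Theorem~\ref{thm:2.9}, choose $y_i, z_i \in R$ with $m_i := y_i x_i \in M$ and $z_i m_i = x_i$; then $m_i s_i = y_i(x_i s_i) \in W \cap (MS)$ and $x_i s_i = z_i(m_i s_i)$, giving
\[ w \;=\; v_0 + \sum_{i=1}^n z_i(m_i s_i), \]
which lies in the $R$-submodule generated by $W_0 \cup (W \cap (MS))$.

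Next I would observe that the map $\Phi : \SA(V;W_0,V_0) \to 2^{MS}$, $W \mapsto W \cap (MS)$, is both monotone and injective. Monotonicity is clear; for injectivity, if $\Phi(W) = \Phi(W')$ then, by the preceding step, both $W$ and $W'$ are generated by the same set $W_0 \cup \Phi(W)$, so $W = W'$. The cardinality estimate now follows immediately: $|\SA(V;W_0,V_0)| \leq |2^{MS}| \leq 2^{|M| \cdot |S|} = 2^{ms}$. For the chain statement, a chain $W_0 \subsetneq W_1 \subsetneq \cdots \subsetneq W_r$ in $\SA(V;W_0,V_0)$ is carried by $\Phi$ to a strictly increasing chain $\Phi(W_0) \subsetneq \Phi(W_1) \subsetneq \cdots \subsetneq \Phi(W_r)$ of subsets of $MS$, and any such chain in the poset $2^{MS}$ has length at most $|MS| \leq ms$.

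The only delicate point---and what I regard as the main obstacle---is the very first step: the decomposition $w = v_0 + \sum x_i s_i$ is highly non-unique in general, and we need the existence of \emph{some} decomposition in which each of the individual pieces $v_0$ and $x_i s_i$ actually lies in $W$. This is exactly what repeated application of the SA-axiom supplies, reducing a two-part split to a statement about single terms; once this is in place the rest is the spine-shortening trick of Theorem~\ref{thm:2.9} plus elementary counting, and requires no further idea.
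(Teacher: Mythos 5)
Your proof is correct and takes essentially the same route as the paper: the paper introduces $U := \sum^{\infty} RS$, uses the SA-property to obtain $W = W_0 + (W \cap U)$, and then invokes Theorem~\ref{thm:2.9} to bound $\SA(U)$ by $2^{ms}$, which is exactly the argument you carry out inline when you show each $W$ is generated by $W_0 \cup \bigl(W \cap (MS)\bigr)$. Your injection of $\SA(V;W_0,V_0)$ directly into the subsets of $MS$, and the resulting cardinality and chain bounds, amount to the same bookkeeping as the paper's passage through $\SA(U)$ and the sets $U_i \cap (MS)$.
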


\begin{proof}
 Let $U$ denote the submodule of $V$ generated by $S$. We have $V = V_0 + U$. If $W \in \SA (V;W_0, V_0)$, then by (1.1)
\begin{equation}\label{eq:4.3}W = W \cap V_0 + W \cap U = W_0 + W \cap U, \end{equation}
and, of course, $W \cap U \in \SA (U)$. Since $\vert \SA (U) \vert
\leq 2^{ms}$, as stated above, we infer that $\vert \SA (V;W_0, V_0)
\vert \leq 2^{ms}.$ Also, if $W_0 \subsetneqq W_1 \subsetneqq \dots
\subsetneqq W_r$ is a chain in $\SA (V;W_0, V_0)$, we conclude from
\eqref{eq:4.3} for $U_i : = W_i \cap U$ that
\[ U_0 \subsetneqq U_1 \subsetneqq \dots \subsetneqq U_r. \]
Every $U_i$ is generated by the set $U_i \cap (MS)$ and so
\[ U_0 \cap (MS) \subsetneqq U_1 \cap (MS) \subsetneqq \dots \subsetneqq U_r \cap (MS). \]
This implies that $r \leq \vert MS \vert = ms$.
\end{proof}

We return to an arbitrary semiring $R$ and permit  infinite sums of SA-submodules,
writing $U \in \Sigma \SA(V) $,  called a \textbf{$\Sigma\SA$-submodule} of $V$.

\begin{thm}\label{Thm:4.4} Assume that $T$ is an additive spine of the $R$-module $V$ (cf. Definition~ \ref{Def:3.1}).
\begin{itemize}\dispace
\item[a)] Then any $U \in \Sigma \SA (V)$ is generated by the set $U \cap T$.
\item[b)]  $\vert T \vert = t$ is finite, then $\vert \Sigma \SA (V) \vert \leq 2^t$, and any chain
\[ U_0 \subsetneqq U_1 \subsetneqq \dots \subsetneqq U_r \]
in $\Sigma \SA (V)$ has length $r \leq t$.
\end{itemize}
\end{thm}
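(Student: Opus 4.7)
The plan is to reduce both parts of the theorem to Theorem~\ref{Thm:3.3}, which already handles a single SA-submodule. The key observation is that a $\Sigma\SA$-submodule is by definition an (possibly infinite) sum of SA-submodules, and both generation by $U \cap T$ and the resulting bounds are preserved under such sums.

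For part (a), I would write $U = \sum_{i \in I} W_i$ with each $W_i \in \SA(V)$. By Theorem~\ref{Thm:3.3}, each $W_i$ is generated as an $R$-module by $W_i \cap T$. Since $W_i \subseteq U$, we have $W_i \cap T \subseteq U \cap T$, so every $W_i$ is contained in the $R$-submodule of $V$ generated by $U \cap T$. Summing over $i \in I$ yields $U \subseteq \langle U \cap T \rangle_R$, and the reverse inclusion is trivial since $U \cap T \subseteq U$. Hence $U$ is generated by $U \cap T$.

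For part (b), the map $\Phi \colon \Sigma\SA(V) \to \mathcal P(T)$, $U \mapsto U \cap T$, is injective by part (a): if $U \cap T = U' \cap T$ then both $U$ and $U'$ are generated by the same subset of $V$, hence equal. This yields $|\Sigma\SA(V)| \leq |\mathcal P(T)| = 2^t$. For the chain bound, injectivity applied to $U_0 \subsetneqq U_1 \subsetneqq \dots \subsetneqq U_r$ forces the strict chain of subsets
\[
U_0 \cap T \subsetneqq U_1 \cap T \subsetneqq \dots \subsetneqq U_r \cap T
\]
in $T$. Each strict inclusion adds at least one element, so $|U_r \cap T| - |U_0 \cap T| \geq r$, and since $|U_r \cap T| \leq t$ and $|U_0 \cap T| \geq 0$, we conclude $r \leq t$. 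This mirrors the counting step used at the end of the proof of Theorem~\ref{Thm:4.1}.

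I do not expect a real obstacle here: once Theorem~\ref{Thm:3.3} is in hand, the only substantive point is part (a), which establishes that the ``fingerprint'' $U \cap T$ remains faithful even when one allows infinite sums of SA-submodules. Parts (b) and the chain bound are then purely combinatorial consequences of the injectivity of $\Phi$.
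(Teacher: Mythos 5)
Your proposal is correct and follows essentially the same route as the paper: part (a) decomposes $U = \sum_{i\in I} W_i$ and invokes Theorem~\ref{Thm:3.3} for each $W_i$, and part (b) uses that each $U_i$ is generated by $U_i \cap T$ to force strictness of the chain $U_0\cap T \subsetneqq \dots \subsetneqq U_r\cap T$ inside $T$, giving the bounds $2^t$ and $r\le t$. Your phrasing via the injectivity of $U \mapsto U\cap T$ is just a slight repackaging of the same argument.
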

\begin{proof} a): Write $U = \sum_{i \in I} W_i$
with $W_i \in \SA (V)$. We know by Theorem \ref{Thm:3.3} that every
$W_i$ is generated by $W_i \cap T$. Thus $U$ is generated by the set
    \[ \bigcup\limits_{i \in I} (W_i \cap T) = \bigg( \bigcup\limits_{i \in I} W_i \bigg) \cap T. \]
A fortiori $U$ is generated by $U \cap T$. \pSkip b): Every $U \in
\Sigma \SA (V)$ is generated by the set $U \cap T \subset T$. We
have at most $2^t$ possibilities for this set, and so $\vert \Sigma
\SA (V) \vert \leq 2^t$. Furthermore, if $U_0 \subsetneqq \dots
\subsetneqq U_r$ is a chain in $\Sigma \SA (V)$, then
    \[U_0 \cap T \subsetneqq U_1 \cap T \subsetneqq \dots \subsetneqq U_r \cap T, \]
since each $U_i$ generated by $U_i \cap T$, and so $r \leq t$.
\end{proof}

%Sums of $\SA_f$-modules which are not finitely generated will  play
%a major role later in    sections \S \ref{sec:5}--\S \ref{sec:7}.
%But now we can already say something of interest about
%\textit{finite} sums of $\SA_f$-modules.

We denote the set of all finitely generated SA-submodules of $V$ by $\SA_f(V)$, and the set of submodules of $V$, which are sums of finitely many elements of $\SA_f(V)$, by $\Sigma_f \SA_f(V)$. (Note that a module in $\Sigma_f \SA_f(V)$ is finitely generated, but perhaps not SA in $V$.)

\medskip
By a variation of our previous arguments we obtain

\begin{thm}
\label{thm:4.6} Assume that $R$ has a finite additive spine $M$, and
also that $U \in \Sigma_f \SA_f (V)$. Let $S$ be a finite set of
generators of $U$. Then every $W \in \SA_f (U)$ is generated by the
finite set $W \cap (MS)$ and every chain
\[ W \supsetneqq W_1 \supsetneqq W_2 \supsetneqq \ds \dots \supsetneqq W_r \]
in $\SA (U)$, hence in $\SA_f (U)$, has length $r \leq \vert M \vert \cdot \vert S \vert$. A fortiori this holds if $W$ and all $W_i$ are in $\SA_f (V)$.
\end{thm}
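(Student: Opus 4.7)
The plan is to read off the first assertion directly from Theorem \ref{thm:2.9} and then translate strict inclusions into strict inclusions of subsets of the finite set $MS$.

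First, since $U \in \Sigma_f\SA_f(V)$ is finitely generated by $S$ and $M$ is an additive spine of $R$, Theorem \ref{thm:2.9} applied with $V$ replaced by $U$ shows that every SA-submodule of $U$ is generated by its intersection with $MS$. In particular, for any $W \in \SA_f(U)$ (and more generally any $W \in \SA(U)$), $W$ is generated by $W \cap (MS)$. Since $|M|$ and $|S|$ are finite, $|MS| \leq |M|\cdot|S|$ is finite, so $W \cap (MS)$ is a finite generating set.

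Next, given a strict descending chain $W \supsetneqq W_1 \supsetneqq \dots \supsetneqq W_r$ in $\SA(U)$, I would consider the induced chain
\[ W \cap (MS) \supseteq W_1 \cap (MS) \supseteq \dots \supseteq W_r \cap (MS) \]
of subsets of $MS$. The key step is to show each inclusion is strict: indeed, if $W_{i-1} \cap (MS) = W_i \cap (MS)$ for some $i$, then since by the first paragraph $W_{i-1}$ is generated by $W_{i-1} \cap (MS)$ and $W_i$ is generated by $W_i \cap (MS)$, the two submodules $W_{i-1}$ and $W_i$ coincide, contradicting $W_{i-1} \supsetneqq W_i$. Hence the chain of intersections is a strictly descending chain of subsets of the finite set $MS$, giving $r \leq |MS| \leq |M|\cdot|S|$.

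For the final "a fortiori" clause, observe that if $W$ and each $W_i$ lie in $\SA_f(V)$ and are all contained in $U$, then each $W_i$ is automatically an SA-submodule of $U$: the SA condition $x+y \in W_i \Rightarrow x,y \in W_i$ for $x,y \in V$ certainly implies the same condition for $x,y \in U$. Hence such a chain is already a chain in $\SA(U)$, and the previously established bound applies. No step here presents a real obstacle — the only thing to watch is that the strictness of the chain is preserved under intersection with $MS$, which is exactly where the generating statement from Theorem \ref{thm:2.9} is used.
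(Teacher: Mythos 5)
Your proposal is correct and follows essentially the same route as the paper: apply Theorem~\ref{thm:2.9} with $V$ replaced by $U$ to get generation by $W\cap(MS)$, note that strict inclusions of the $W_i$ force strict inclusions of the sets $W_i\cap(MS)$ inside the finite set $MS$ (since each $W_i$ is recovered from $W_i\cap(MS)$), and observe that an SA-submodule of $V$ contained in $U$ is SA in $U$. Your explicit justification of the strictness step is exactly the argument the paper leaves implicit.
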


\begin{proof}
 Every $W \in \SA_f (U)$ is generated by the finite set $W \cap (MS)$, cf. Theorem \ref{thm:2.9}. Furthermore,  by the same theorem, every $W_i$ is generated by the subset $W_i \cap ~(MS)$ of $W \cap (MS)$. It follows that
\[ W \cap (MS) \ds \supsetneqq W_1 \cap (MS) \ds \supsetneqq \dots \ds \supsetneqq W_r \cap (MS) \]
and so $r \leq \vert W \cap (MS) \vert \leq \vert M \vert \cdot \vert S \vert$. It is obvious that every SA-submodule of $V$  contained in $U$ is SA in $U$.
\end{proof}

%\begin{example}\label{Exam:5.2}
%  We read off from Theorem \ref{thm:4.6} that, if $R$ has a finite additive spine, then every module $U \in \Sigma_f \SA_f (V)$ is SA-noetherian
%  and SA-artinian.
%\end{example}

Our final result %in this section
refers to modules with additive spines which are not necessarily finite.

\begin{thm}
\label{Thm:4.7} Assume that $T \subset V$ is an additive spine of the $R$-module $V$, and $U \in ~\Sigma \SA (V)$.
\begin{itemize}\dispace
\item[a)] Then $U$ is generated by the set $U \cap T$.
\item[b)] If $U$ is an $\SA_f$-sum in $V$, and $(W_i \ds \vert i \in I)$ is a family of finitely generated $\SA$-submodules of $V$ with $U = \sum\limits_{i \in I} W_i$, then every $W_i$ is generated by a finite subset $T_i$ of $W_i \cap T$, and so $U$ is generated by the subset $\bigcup\limits_{i \in I} T_i = T'$ of $T$. This subset $T'$ is an additive spine of $U$.
    \item[c)] If $U \in \Sigma_f \SA_f (V)$ then $U$ is generated by a finite subset of $U \cap T$, and this is an additive spine of $U$.
\end{itemize}
\end{thm}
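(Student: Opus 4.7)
The plan is to dispatch (a) as a restatement of Theorem~\ref{Thm:4.4}(a) and (c) as a finite-sum specialization of (b), with (b) being the substantive step. For (a) one can simply invoke Theorem~\ref{Thm:4.4}(a), or rerun its short argument: decompose $U = \sum_{i\in I} W_i$ with $W_i \in \SA(V)$, apply Theorem~\ref{Thm:3.3} to each $W_i$ (so $W_i$ is generated by $W_i \cap T \subseteq U \cap T$), and sum. For (c), once (b) is in hand, $U \in \Sigma_f \SA_f(V)$ unpacks as $U = W_1 + \cdots + W_k$ for finitely many $W_i \in \SA_f(V)$, and then the union $T' = T_1 \cup \cdots \cup T_k$ of the finite sets $T_i$ produced by (b) is itself finite, $R$-generates $U$, and is an additive spine of $U$.

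For part (b), fix a decomposition $U = \sum_{i\in I} W_i$ with each $W_i \in \SA_f(V)$. Theorem~\ref{Thm:3.3} gives that $W_i \cap T$ is an additive spine of $W_i$. Since $W_i$ is finitely generated, I would fix a finite $R$-generating set $\{y_1,\dots,y_{n_i}\}$ of $W_i$, decompose each $y_j = \sum_k v_{jk}$ as a finite sum of halo elements $v_{jk} \in \hal_{W_i}(W_i \cap T)$, and for each $v_{jk}$ pick a witness $t_{jk} \in W_i \cap T$ together with $\lambda_{jk}, \mu_{jk} \in R$ satisfying $\lambda_{jk} v_{jk} = t_{jk}$ and $\mu_{jk} t_{jk} = v_{jk}$. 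Setting $T_i := \{t_{jk}\}$ produces a finite subset of $W_i \cap T$; the identities $y_j = \sum_k \mu_{jk} t_{jk}$ show $T_i$ $R$-generates $W_i$, and hence $T' := \bigcup_i T_i$ $R$-generates $U = \sum_i W_i$. For the additive-spine claim I would verify that each $T_i$ is itself an additive spine of $W_i$ and then apply Proposition~\ref{Prop:3.6}(b) to the decomposition $U = \sum_i W_i$ to conclude that $T'$ is an additive spine of $U$.

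The delicate step is verifying that the finite $T_i$ extracted above is genuinely an additive spine of $W_i$, not merely an $R$-generating set. The construction directly shows only that the chosen generators $y_j$ lie in $\sum^\infty \hal_{W_i}(T_i)$; extending this to arbitrary $R$-linear combinations $\sum_j r_j y_j \in W_i$ requires the $R$-action to be compatible with the halo witnesses, and may in principle force a modest enlargement of $T_i$ (kept finite) to absorb additional elements of $W_i \cap T$ needed to cover $R$-multiples of the $v_{jk}$. Once this finite enlargement is secured, the remaining assembly via Proposition~\ref{Prop:3.6}(b) in (b) and the finite-union remark in (c) are purely formal.
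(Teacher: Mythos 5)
Your handling of a), of the generation statements in b), and of c) given b) is correct and essentially identical to the paper's own proof: a) is just Theorem~\ref{Thm:4.4}(a); in b) one writes each of the finitely many generators of $W_i$ as a finite sum of elements of $\hal_{W_i}(W_i \cap T)$ (using Theorem~\ref{Thm:3.3}), collects the finitely many witnesses $t_{jk} \in W_i \cap T$, obtains a finite $T_i \subset W_i \cap T$ with $W_i = \sum^{\infty} R\,T_i$, and hence $T' = \bigcup_i T_i$ generates $U$; c) follows by taking $I$ finite. At the remaining step the paper asserts that this $T_i$ is an additive spine of $W_i$ (citing Theorem~\ref{Thm:3.3}) and then applies Proposition~\ref{Prop:3.6}(b), which is exactly the assembly you describe.

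The genuine gap in your proposal is precisely the point you flag and then leave open: you never produce the ``modest enlargement'' that is supposed to make $T_i$ an additive spine of $W_i$, so the final assertions of b) (that $T'$ is an additive spine of $U$) and of c) are not proved. Moreover, your suspicion is substantive and cannot be dismissed as ``purely formal'': an $R$-generating subset of an additive spine need not itself be an additive spine, and no finite enlargement inside $T$ need exist. For instance, let $R = V = \mathbb{N}_0[x]$ and $T = \{ x^n \mid n \geq 0 \}$; then $T$ is an additive spine of $V$, and $U = W_1 = V$ is a finitely generated SA-submodule of itself, but for any finite $F \subset T$ one has $\hal_V(F) = F$ (since $\mu\lambda v = v$ with $v \neq 0$ forces $\mu = \lambda = 1$ in $\mathbb{N}_0[x]$), so $\sum^{\infty} \hal_V(F)$ is only the $\mathbb{N}_0$-span of $F$ and never all of $V$, even though the single element $1 \in T$ already generates $V$ over $R$. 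So your argument, as written, establishes only the generation claims; closing the additive-spine claims requires either a genuinely new idea or extra hypotheses, and the appeal to Theorem~\ref{Thm:3.3} at this point (which is how the paper's proof proceeds) does not by itself supply the missing implication ``generating subset of $W_i \cap T$ $\Rightarrow$ additive spine of $W_i$.''
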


\begin{proof} We choose a family $(W_i \ds \vert i \in I)$ in $\SA_f (V)$ with $U = \sum\limits_{i \in I} W_i$.\\
a): Done before (Theorem \ref{Thm:4.4}).\pSkip
b): We assume now that all $W_i$ are finitely generated. Every $W_i$ is generated by $W_i \cap T$ (Theorem \ref{Thm:3.3}). It follows that $W_i$ is generated by a finite subset $T_i$ of $W_i \cap T$. Indeed, given generators $s_1, \dots, s_r$ of $W_i$ for $i$ fixed, write every $s_j$ as a linear combination of a finite subset $T_{ij}$ of $W_i \cap T$. Then $T_i := \bigcup\limits_{j = 1}^r T_{ij}$ does it. It follows by Theorem~\ref{Thm:3.3} that $T_i$ is an additive spine of $W_i$. It now is clear that $T': = \bigcup\limits_{i \in I} T_i$ generates $U = \sum\limits_{i \in I} W_i$, and it follows by Proposition \ref{Prop:3.6} that $T'$ is an additive spine of $U$. \pSkip
c): Now evident, since the index set $I$ can be assumed to be finite, and so $T' = \bigcup\limits_{i \in I} T_i$ is a finite additive spine of $U$. \end{proof}

\end{document}